\newtheorem{proposition}{Proposition}[section]
\newtheorem{theorem}[proposition]{Theorem}
\newtheorem{lemma}[proposition]{Lemma}
\newenvironment{proof}{\smallskip\noindent\emph{Proof.}\hspace{1pt}}%
{\hspace{-5pt}{\nobreak\quad\nobreak\hfill\nobreak$\square$\vspace{8pt}%
\par}\smallskip\goodbreak}
\newcommand{\C}[1]{\mathbf{C^{#1}}}
\newcommand{\naturali}{{\mathbb{N}}}
\renewcommand{\epsilon}{\varepsilon}
\renewcommand{\phi}{\varphi}
\renewcommand{\theta}{\vartheta}
\renewcommand{\L}[1]{\mathbf{L^#1}}
\begin{document}

\title{A Riemann solver at a junction compatible with a homogenization limit}

\author{M. Garavello\thanks{Dipartimento di Matematica e Applicazioni,
    Universit\`a di Milano Bicocca, Via R. Cozzi 55, 20125 Milano, Italy.
    E-mail: \texttt{mauro.garavello@unimib.it}} \and
  F. Marcellini\thanks{Dipartimento di Matematica e Applicazioni,
    Universit\`a di Milano Bicocca, Via R. Cozzi 55, 20125 Milano, Italy.
    E-mail: \texttt{francesca.marcellini@unimib.it}}}

\maketitle

\begin{abstract}
  We consider a junction regulated by a traffic lights,
  with $n$ incoming roads and only one outgoing road.
  On each road the Phase Transition traffic model,
  proposed in~\cite{ColomboMarcelliniRascle}, describes the evolution of
  car traffic. Such model is an extension of the classic
  Lighthill-Whitham-Richards one, obtained by assuming that 
  different drivers may have different maximal speed.

  By sending to infinity the number of cycles of the traffic lights,
  we obtain a justification of the Riemann solver introduced 
  in~\cite{GaravelloMarcellini} and in particular of the rule
  for determining the
  maximal speed in the outgoing road.

\noindent\textit{2000~Mathematics Subject Classification:} 35L65,
  90B20

  \medskip

  \noindent\textit{Key words and phrases:} Phase Transition Model,
  Hyperbolic Systems of
  Conservation Laws, Continuum Traffic Models,
  Homogenization Limit.
\end{abstract}

\section{Introduction}
\label{sec:Intro}

This paper deals with the Phase Transition traffic model,
proposed by Colombo, Marcellini, and Rascle in~\cite{ColomboMarcelliniRascle},
at a junction with $n \ge 2$ incoming roads and only one outgoing road.
The aim is to give a mathematical derivation for the solution
of the Riemann problem at the crossroad 
proposed in~\cite{GaravelloMarcellini}. We obtain the justification
for such solution by using a homogenization procedure.

The traffic model considered in this paper is a system of $2 \times 2$
conservation laws; it belongs to the class of macroscopic second
order models as the famous Aw-Rascle-Zhang model,
see~\cite{AwRascle, Zhang2002}. As the name Phase Transition suggests,
the model is characterized by two different phases,
the free one and the congested one;
see~\cite{BlandinWorkGoatinPiccoliBayen, Colombo1.5,
  MR2032809, 2017arXiv170203624F, GaravelloHanPiccoli,
  Goatin2Phases, LebacqueMammarHajSalem, Marcellini, Marcellini2} and the references
therein for similar descriptions. 
The Phase Transition model we consider here is derived from the famous
Lighthill-Whitham-Richards one~\cite{LighthillWhitham, Richards}
by assuming that different drivers may have different maximal speeds.

The extension of the Phase Transition model to the case of networks
is considered in~\cite{GaravelloMarcellini}. 
The key point for extending a model to a network
consists in providing a concept of solution at nodes.
A possible way to do this is to construct a Riemann solver at nodes,
i.e. a function
which associates to each Riemann problem at the node a solution.
A reasonable Riemann solver has to satisfy the mass conservation,
a consistency condition; it should produce waves with
negative speed in the incoming edges and with
positive speed in the outgoing ones.
A Riemann solver satisfying
such properties is proposed in~\cite{GaravelloMarcellini}.
In particular, it prescribes that
the maximal speed in the outgoing road is a convex combination of the
maximal speed in the incoming arcs. Similar conditions are also present
in~\cite{HertyKlar2003, HertyRascle, 2017arXiv170701683K}.

In this paper we are going to investigate the delicate issue of
how the maximal speed \emph{changes} through the junction.
To this aim, we consider a single junction regulated by a time-periodic
traffic lights. At each time the green light applies only at one incoming
road. Vehicles, in the remaining incoming roads, are then stopped, waiting
for their green light.
With a limit-average procedure, we are able to find the relation between
the incoming maximal speeds and the outgoing one.
In this way, the maximal outgoing speed turns out to be a
convex combination of the $n$ incoming ones and it satisfies
the corresponding condition prescribed by the Riemann solver 
in~\cite{GaravelloMarcellini}.

The paper is organized as follow. In the next section we recall the 2-Phases
Traffic Model introduced in~\cite{ColomboMarcelliniRascle} and the solution
to the classical
Riemann problem along a single road of infinite length.
In Section~\ref{sec:Main} we consider a time periodic
traffic lights regulating the intersection and we study the solution in the
outgoing road as the time period of the traffic lights tends to $0$.
More precisely, in Subsection~\ref{subsec:I} we describe in details
the solution in the simple situation
with $n = 2$ incoming roads and, finally, in Subsection~\ref{subsec:II}
we generalize the previous study to the case of $n \ge 2$ incoming roads
and we state and prove the main result, concerning 
the rule for the maximal speed
in the outgoing road,
by using a limit-average procedure.

\section{Notations and the Riemann Problem on a Single Road}
\label{sec:Des}
The Phase Transition model, introduced in~\cite{ColomboMarcelliniRascle}, 
is given by: 
\begin{equation}
  \label{eq:Modeleta}
  \left\{
    \begin{array}{l}
      \partial_t \rho +
      \partial_x \left( \rho\, v (\rho,\eta) \right) = 0
      \\
      \partial_t \eta +
      \partial_x \left( \eta\, v (\rho, \eta) \right) = 0
    \end{array}
  \right.
  \quad \mbox{ with } \quad
  v(\rho, \eta)
  =
  \min \left\{ V_{\max}, \frac{\eta}{\rho}\, \psi(\rho) \right\},
\end{equation}
where $t$ denotes the time, $x$ the space, $\rho \in [0, R]$
is the traffic density, $\eta$ is a generalized
momentum, $v \in [0, V_{\max}]$ is the speed of cars, and
$V_{\max}$ is a uniform bound of the cars' speed.

It is obtained as an extension of the Lighthill-Whitham-Richards
model~\cite{LighthillWhitham,Richards}, by assuming that different
drivers have different maximal speed, denoted by the quantity
$w=\eta / \rho \in \left[\check w, \hat w\right]$.
It is characterized by two phases,
the free one and congested one, which are described by the sets
\begin{align}
  \label{eq:phF}
  F
  & = 
    \left\{
    (\rho, \eta) \in [0,R] \times [0, \hat w R]
    \colon \check w \rho \le \eta \le \hat w \rho, \,
    v(\rho, \eta) = V_{\max}
    \right\},
  \\
  \label{eq:phC}
  C
  & =
    \left\{
    (\rho, \eta) \in [0,R] \times [0, \hat w R]
    \colon 
    \check w \rho \le \eta \le \hat w \rho, \,
    v(\rho, \eta) = \frac{\eta}{\rho} \, \psi(\rho)
    \right\}\,,
\end{align}
see Figure~\ref{fig:phases}.
\begin{figure}[t!]
\centering
\input{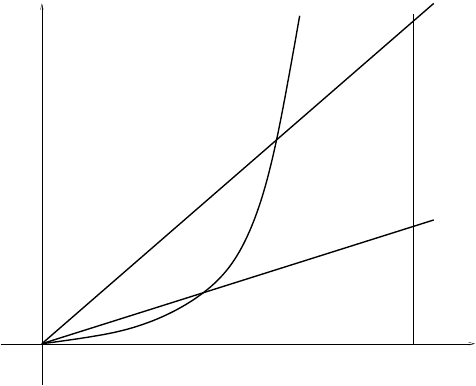tex_t}
\hfil
\input{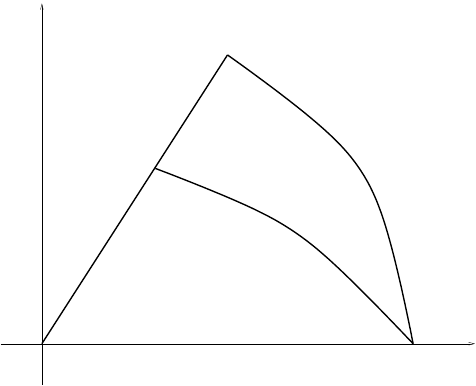tex_t}
\caption{The free phase $F$ and the congested phase $C$ resulting
from\/ {\rm(\ref{eq:Modeleta})} in the coordinates, from left to right, $(\rho,\eta)$ and $(\rho, \rho v)$. Note that F and C are closed sets and $F \cap C \neq \emptyset $. Note also that $F$ is $1$--dimensional in the $(\rho, \rho v)$ plane, while it is $2$--dimensional in the $(\rho,\eta)$ coordinates. In the $(\rho,\eta)$ plane, the curve
    $\eta= \frac{V_{\max}}{\psi(\rho)}\rho $ divides the two phases.}\label{fig:phases}
\end{figure} 
As in~\cite{ColomboMarcelliniRascle,GaravelloMarcellini}, we assume the following hypotheses.
\begin{description}
\item[(H-1)] $R,\check w, \hat w, V_{\max}$ are positive
  constants, with $V_{\max} < \check w < \hat w$.

\item[(H-2)] $\psi \in \C{2} \left( [0,R];[0,1]\right)$ is such that
  $\psi(0) = 1$, $\psi(R) = 0$, and, for every $\rho \in (0,R)$,
  $\psi'(\rho) \le 0$,
  $\frac{d^2\ }{d\rho^2} \left( \rho\, \psi(\rho) \right) \le 0$.

\item[(H-3)] Waves of the first family in the congested phase $C$ have negative speed.
\end{description}

By~\textbf{(H-1)}, \textbf{(H-2)}, and \textbf{(H-3)}, system~(\ref{eq:Modeleta})
is strictly hyperbolic in $C$, see~\cite{ColomboMarcelliniRascle}, and
\begin{equation*}
  \begin{array}{@{}rcl@{\quad}rcl@{}}
    \lambda_{1} (\rho, \eta)
    \!\!&\!\! = \!\!&\!\!
                      \eta\, \psi'(\rho) + v(\rho, \eta),
    &
      \lambda_{2} (\rho, \eta)
      \!\!&\!\! = \!\!&\!\!
                        v(\rho, \eta),
    \\[5pt]
    r_{1} (\rho, \eta)
    \!\!&\!\! = \!\!&\!\!
                      \left[
                      \begin{array}{c}
                        -\rho
                        \\
                        -\eta
                      \end{array}
    \right],
        &
          r_{2} (\rho, \eta)
          \!\!&\!\! = \!\!&\!\!
                            \left[
                            \begin{array}{c}
                              1
                              \\
                              \eta\left( \frac{1}{\rho}-\frac{\psi'(\rho) }
                              {\psi(\rho) }\right)
                            \end{array}
    \right],
    \\
    \nabla \lambda_1 \cdot r_1
    \!\!&\!\! = \!\!&\!\!
                      \displaystyle
                      -\frac{d^2\ }{d\rho^2} \left[ \rho\, \psi(\rho) \right],
    &
      \nabla \lambda_2 \cdot r_2
      \!\!&\!\! = \!\!&\!\!
                        0,
    \\
    \mathcal{L}_1(\rho;\rho_o,\eta_o)
    \!\!&\!\! = \!\!&\!\!
                      \displaystyle
                      \eta_o \frac{\rho}{\rho_o},
    &
      \mathcal{L}_2(\rho;\rho_o,\eta_o)
      \!\!&\!\! = \!\!&\!\!
                        \displaystyle
                        \frac{\rho \, v(\rho_o, \eta_o)}{\psi(\rho)},
                        \; \rho_o < R,
  \end{array}
\end{equation*}
where $\lambda_i$ and $r_i$ are respectively the eigenvalues and
the right eigenvectors of the Jacobian matrix of the flux, and 
$\mathcal L_i$ are the Lax curves. 
When $\rho_o = R$, the 2-Lax curve through $(\rho_o, \eta_o)$ is given by
the segment $\rho=R$, $\eta \in [R \check w, R \hat w]$.  

In view of the results of the next section, we recall the description of
the solutions of the Riemann problem for the model~(\ref{eq:Modeleta}).
First, we enumerate all the possible waves for~(\ref{eq:Modeleta}).
\begin{itemize}
\item \textsl{A Linear wave} is a wave connecting two states in the free
  phase. It always travels with speed $V_{\max}$.

\item \textsl{A Phase Transition Wave} is a wave connecting a left state
  $\left(\rho_l, \eta_l\right) \in F$ with a right state
  $\left(\rho_r, \eta_r\right) \in C$ satisfying 
  $\frac{\eta_l}{\rho_l} = \frac{\eta_r}{\rho_r}$.
  It always travels with speed given by the Rankine-Hugoniot condition.

\item \textsl{A Wave of the First Family} is a wave connecting a left state 
  $\left(\rho_l, \eta_l\right) \in C$ with a right state
  $\left(\rho_r, \eta_r\right) \in C$ such that
  $\frac{\eta_l}{\rho_l} = \frac{\eta_r}{\rho_r}$. It is either a rarefaction
  wave or a shock wave.

\item \textsl{A Wave of the Second Family} is a wave connecting a left state 
  $\left(\rho_l, \eta_l\right) \in C$ with a right state
  $\left(\rho_r, \eta_r\right) \in C$ such that
  $v\left(\rho_l, \eta_l\right) = v\left(\rho_r, \eta_r\right)$.
  It always travels with speed $v\left(\rho_l, \eta_l\right)$.
\end{itemize}

\subsection{The Riemann Problem along a Single Road}
Under the assumptions \textbf{(H-1)}, \textbf{(H-2)} and \textbf{(H-3)},
for all states $(\rho_l,\eta_l)$ and $(\rho_r, \eta_r) \in F \cup C$,
the Riemann problem consisting of~(\ref{eq:Modeleta}) with initial data
\begin{equation}
  \label{eq:RD}
  \rho(0,x) = \left\{
    \begin{array}{l@{\quad\mbox{ if }\,}rcl}
      \rho_l & x & < & 0
      \\
      \rho_r & x & > & 0
    \end{array}
  \right.
  \qquad
  \eta(0,x) = \left\{
    \begin{array}{l@{\quad\mbox{ if }\, }rcl}
      \eta_l & x & < & 0
      \\
      \eta_r & x & > & 0
    \end{array}
  \right.
\end{equation}
admits a unique self similar weak solution $(\rho,\eta) =
(\rho,\eta) (t,x)$ constructed as follows:
\begin{enumerate}[(1)]
\item If $(\rho_l,\eta_l), (\rho_r,\eta_r) \in F$, then the solution attains values in $F$ and consists of a linear wave separating $(\rho_l,\eta_l)$ from $(\rho_r,\eta_r)$.  
  
\item If $(\rho_l,\eta_l), (\rho_r,\eta_r) \in C$, then the solution attains values in $C$ and consists of a wave of the first family (shock or rarefaction) between $(\rho_l, \eta_l)$ and a middle state $(\rho_m, \eta_m)$, followed by a wave of the second family between $(\rho_m, \eta_m)$ and $(\rho_r, \eta_r)$.  The middle state $(\rho_m, \eta_m)$ belongs to $C$ and is uniquely characterized by the two conditions $\frac{\eta_m}{\rho_m} = \frac{\eta_l}{\rho_l}$ and
  $v(\rho_m, \eta_m) = v(\rho_r, \eta_r)$.

\item If $(\rho_l,\eta_l) \in C$ and $(\rho_r,\eta_r) \in F$, then the solution attains values in $F \cup C$ and consists of a wave of the first family separating $(\rho_l, \eta_l)$ from a middle state $(\rho_m, \eta_m)$ and by a linear wave separating $(\rho_m, \eta_m)$ from $(\rho_r,\eta_r)$. The middle state $(\rho_m, \eta_m)$ belongs to the intersection between $F$ and $C$ and is uniquely characterized by the two conditions $\frac{\eta_m}{\rho_m} = \frac{\eta_r}{\rho_r}$ and $v(\rho_m, \eta_m) = V_{\max}$.

  \item If $(\rho_l,\eta_l) \in F$ and $(\rho_r,\eta_r) \in C$, then the solution attains values in $F \cup C$ and consists of a phase transition wave between $(\rho_l, \eta_l)$ and a middle state $(\rho_m, \eta_m)$, followed by a wave of the second family between $(\rho_m, \eta_m)$ and $(\rho_r, \eta_r)$.  The middle state $(\rho_m, \eta_m)$ is in $C$ and is uniquely characterized by the two conditions $\frac{\eta_m}{\rho_m} = \frac{\eta_l}{\rho_l}$ and $v(\rho_m, \eta_m) = v(\rho_r, \eta_r)$.
 
\end{enumerate}  

\section{The Limit at a Junction with Traffic Lights}
\label{sec:Main}

Fix a junction with $n$ incoming roads and a single outgoing road.
In~\cite{GaravelloMarcellini}, it is introduced a Riemann solver at the
junction, which conserves the mass 
\begin{equation}
  \label{eq:conservation1}
  \sum_{i = 1}^n \rho_i v(\rho_i, \eta_i) = 
  \rho_{n+1} v\left(\rho_{n+1}, \eta_{n+1}\right)\,,
\end{equation}
and prescribes that the maximal speed in the outgoing road is given by
\begin{equation}
  \label{eq:conservation2}
  w_{n+1} = \frac{\displaystyle\sum_{i=1}^n \sigma_i \rho_i
    v(\rho_i, \eta_i) \,w_i}
  {\displaystyle\sum_{i=1}^n \sigma_i \rho_i v(\rho_i, \eta_i) },
\end{equation}
for suitable coefficients $\sigma_1 > 0, \cdots, \sigma_n > 0$, satisfying
$\sigma_1 + \cdots + \sigma_n = 1$.

In this section we provide a justification for the
rule~(\ref{eq:conservation2}).
To this aim, fix a positive time $T > 0$ and assume that the junction is
regulated by a traffic lights, which alternates periodically the right of
way among the incoming roads. More precisely, assume that the traffic lights
repeats $\ell \in \naturali \setminus \left\{0\right\}$ cycles in the time
interval $[0,T]$ and that each cycle of length $\frac{T}{\ell}$ is divided
into $n$ subintervals of length $\tau_1^\ell, \cdots, \tau_n^\ell$,
which represent respectively
the duration of the green light for the corresponding
incoming road. The first cycle $\left[0, \frac{T}{\ell}\right[$
is thus composed by
\begin{equation*}
  \left[0, \frac{T}{\ell}\right[ = 
  \left[0, \tau_1^\ell\right) \bigcup
  \left[\tau_1^\ell, \tau_1^\ell + \tau_2^\ell\right)
  \bigcup \cdots \bigcup
  \left[\tau_1^\ell + \cdots + \tau_{n-1}^\ell, \tau_1^\ell + \cdots 
    + \tau_n^\ell\right),
\end{equation*}
where $\left[0, \tau_1^\ell\right)$ is the time interval of the green for
the road $I_1$ and so on.

Denote, for every $i \in \left\{1, \ldots, n-1\right\}$,
\begin{equation}
  \label{eq:sigma}
  \sigma_i = \frac{\tau_i^\ell}{\tau_n^\ell},
\end{equation}
which we suppose that it does not depend on $\ell$. Thus the constant
$\sigma_i$ is the ratio between the green time
interval for roads $I_i$ and $I_n$. For simplicity we put $\sigma_n = 1$.

\subsection{Basic Situations}
\label{subsec:I}
In this subsection we treat only the special junction with $n = 2$ incoming
roads (namely $I_1$ and $I_2$) and a single outgoing road $I_3$.
We assume that 
assumptions~\textbf{\textup{(H-1)}}, \textbf{\textup{(H-2)}},
and~\textbf{\textup{(H-3)}} hold. As introduced in~(\ref{eq:sigma}), the
positive constants
\begin{equation*}
  \sigma_1 = \frac{\tau_1^\ell}{\tau_2^\ell},\qquad
  \qquad\sigma_2 = 1
\end{equation*}
do not depend on the number of cycles $\ell$.
This means that the ratio between the green and red times is constant
in each incoming road.

Given, for every $i \in \left\{1, 2, 3\right\}$, initial conditions
$(\bar \rho_{i}, \bar \eta_{i}) \in F \cup C$, we denote
with $\left(\rho_{\ell,i} (t,x), \eta_{\ell,i} (t,x)\right)$
($i \in \left\{1, 2, 3\right\}$)
the solution to the Riemann problem, the junction being governed by
the traffic lights with $\ell$ cycles.

 Introduce the following notation. With
  $\left(\rho_1^\sharp, \eta_1^\sharp\right)$ and
  $\left(\rho_2^\sharp, \eta_2^\sharp\right)$
  we call the points in the congested region $C$ satisfying
  \begin{equation*}
    \frac{\eta_1^\sharp}{\rho_1^\sharp} = \bar w_1,
    \qquad
    \frac{\eta_2^\sharp}{\rho_2^\sharp} = \bar w_2,
    \qquad
    v\left(\rho_1^\sharp, \eta_1^\sharp\right) =
    v\left(\rho_2^\sharp, \eta_2^\sharp\right) =
    v\left(\bar \rho_3, \bar \eta_3\right).
  \end{equation*}
  Moreover with
  $\left(\rho_1^\flat, \eta_1^\flat\right)$ and
  $\left(\rho_2^\flat, \eta_2^\flat\right)$
  we call the points in the intersection between the free and congested region $F \cap C$ satisfying
  \begin{equation*}
    \frac{\eta_1^\flat}{\rho_1^\flat} = \bar w_1,
    \qquad
    \frac{\eta_2^\flat}{\rho_2^\flat} = \bar w_2,
    \qquad
    v\left(\rho_1^\flat, \eta_1^\flat\right) =
    v\left(\rho_2^\flat, \eta_2^\flat\right) = V_{\max} .
  \end{equation*}
  Note that the points $\left(\rho_i^\sharp, \eta_i^\sharp\right)$ and
  $\left(\rho_i^\flat, \eta_i^\flat\right)$, $i = 1,2$,
  are uniquely defined; see Figure~\ref{fig:ex}.
\begin{figure}[t!]
    \centering
    \input{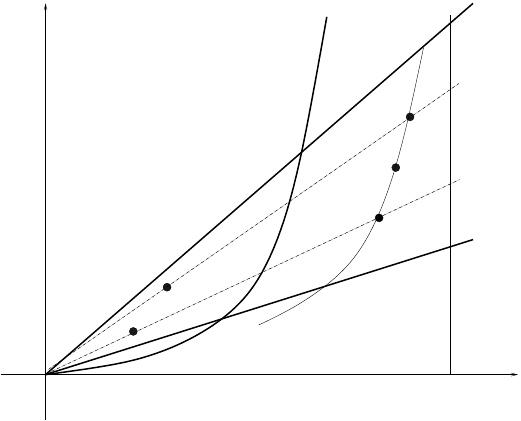tex_t}
    \hfil
    \input{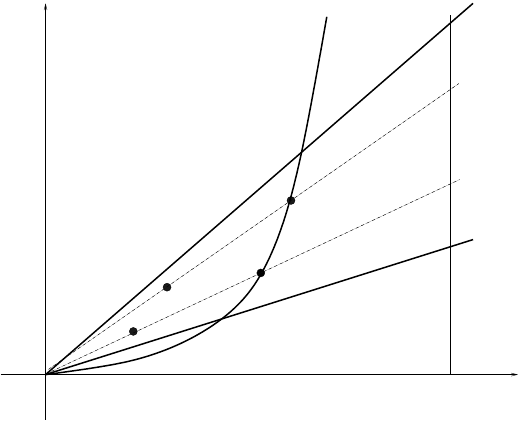tex_t}
    \caption{Left, an example of representation for the points $\left(\rho_1^\sharp, \eta_1^\sharp\right)$ and $\left(\rho_2^\sharp, \eta_2^\sharp\right)$, right an example of representation for the points $\left(\rho_1^\flat, \eta_1^\flat\right)$ and
  $\left(\rho_2^\flat, \eta_2^\flat\right)$.}
\label{fig:ex}
  \end{figure}  

The next lemmas describe the solution at the junction with the traffic lights
for the possible different situations that may happen.
\begin{lemma}
\label{Lemma1}
  Assume that the initial conditions belong to the congested phase, i.e.,
  for every $i \in \left\{1, 2, 3\right\}$,
  $(\bar \rho_{i}, \bar \eta_{i}) \in C$.
  Then the solution in the outgoing road $I_3$ is
  \begin{equation}
    \label{eq:sol_I_3_case1}
    \left(\rho_{\ell, 3}, \eta_{\ell, 3}\right) (t,x) =
    \left\{
      \begin{array}{l@{\quad}l}
        \left(\bar \rho_3, \bar \eta_3\right),
        &
          \textrm{ if }\, 0 < t < T, \quad
          x > v\left(\bar \rho_3, \bar \eta_3\right) t,
        \\
        \left(\rho_{1}^\sharp, \eta_1^\sharp\right),
        &
          \textrm{ if }\, (t, x) \in A_1^\ell,
        \\
        \left(\rho_{2}^\sharp, \eta_2^\sharp\right),
        &
          \textrm{ if }\, (t, x) \in A_2^\ell,
      \end{array}
    \right.
  \end{equation}
  where
  \begin{equation}
    \label{eq:A_1}
    A_1^\ell = \bigcup_{i=0}^{\ell - 1} \left\{(t, x):\,
      \begin{array}{c}
        0 < t < T, \quad x > 0 
        \\
        t - \tau_1^\ell - i \frac{T}{\ell} 
        < \frac{x}{v\left(\bar \rho_3, \bar \eta_3\right)}
        < t - i \frac{T}{\ell}
      \end{array}
    \right\}
  \end{equation}
  and
  \begin{equation}
    \label{eq:A_2}
    A_2^\ell = \bigcup_{i=1}^{\ell} \left\{(t, x):\,
      \begin{array}{c}
        0 < t < T, \quad x > 0 
        \\
        t - i \frac{T}{\ell} 
        < \frac{x}{v\left(\bar \rho_3, \bar \eta_3\right)}
        < t - i \frac{T}{\ell} + \tau_1^\ell
      \end{array}
    \right\}.
  \end{equation}
\end{lemma}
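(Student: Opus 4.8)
The plan is to solve the junction Riemann problem phase by phase and then glue the resulting elementary waves in the outgoing road $I_3$, exploiting the two Riemann invariants of the system. Recall from the Lax curves that $w=\eta/\rho$ is constant across waves of the first family and across phase transition waves (both satisfy $\eta_l/\rho_l=\eta_r/\rho_r$), while the speed $v$ is constant across waves of the second family, the $2$--family being linearly degenerate (indeed $\nabla\lambda_2\cdot r_2=0$ and $\mathcal{L}_2$ preserves $v(\rho_o,\eta_o)$). These two facts drive the whole argument.

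First I would show that the speed in $I_3$ is pinned at $v(\bar\rho_3,\bar\eta_3)$ for all $t\in(0,T)$. In the outgoing road only waves of positive speed are admissible; by~\textbf{(H-3)} waves of the first family in $C$ travel with negative speed, so the downstream state $(\bar\rho_3,\bar\eta_3)\in C$ can be joined to the trace at $x=0^+$ only through a wave of the second family. Since such a wave is a contact discontinuity preserving $v$, every state occurring in $I_3$ has speed $v(\bar\rho_3,\bar\eta_3)$, and every interface it produces is a second-family contact travelling at the single speed $v(\bar\rho_3,\bar\eta_3)>0$.

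Next I would identify the trace of $(\rho_{\ell,3},\eta_{\ell,3})$ at $x=0^+$ during each phase. When the green light is on for $I_i$, the remaining incoming roads are closed and carry zero flux, so by mass conservation~\eqref{eq:conservation1} the whole outgoing flux comes from $I_i$; moreover, in the incoming road the datum $(\bar\rho_i,\bar\eta_i)$ is connected to the junction only through negative-speed waves of the first family (and possibly a phase transition), all of which preserve $w$, so the trace of $I_i$ at the junction still has $w=\bar w_i$. Since the $\eta$--flux equals $w$ times the mass flux, the second conservation law then forces the trace in $I_3$ to satisfy $\eta_3^+/\rho_3^+=\bar w_i$; combined with $v=v(\bar\rho_3,\bar\eta_3)$ from the previous step, it is the unique point of $C$ with these two invariants, that is $(\rho_i^\sharp,\eta_i^\sharp)$. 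Here I would check that the congested road $I_i$, having accumulated cars during its red phase, can actually realize this flux, so that the junction solver selects exactly $(\rho_i^\sharp,\eta_i^\sharp)$ and emits no additional wave into $I_3$.

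Finally I would assemble the solution. Because all interfaces in $I_3$ are second-family contacts with the common speed $v(\bar\rho_3,\bar\eta_3)$, they are the parallel, non-interacting lines $x=v(\bar\rho_3,\bar\eta_3)\,(t-s)$, and the value at $(t,x)$ equals the trace emitted at time $s=t-x/v(\bar\rho_3,\bar\eta_3)$: it is $(\bar\rho_3,\bar\eta_3)$ when $s<0$, i.e. $x>v(\bar\rho_3,\bar\eta_3)\,t$, and it is $(\rho_i^\sharp,\eta_i^\sharp)$ when $s$ falls in a green interval of $I_i$. Transporting the green time-intervals of $I_1$ and of $I_2$ through the change of variable $s=t-x/v(\bar\rho_3,\bar\eta_3)$ produces precisely the sets $A_1^\ell$ and $A_2^\ell$, and a Rankine--Hugoniot check across each contact together with the flux balance at the junction confirms that this piecewise constant field is the weak solution. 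The delicate point, and the one I would spend most care on, is the previous step: proving that the junction Riemann solver keeps $v$ equal to $v(\bar\rho_3,\bar\eta_3)$ and returns exactly $(\rho_i^\sharp,\eta_i^\sharp)$ at each instant, including the right-continuity of the trace at the switching times, since the global assembly is then a mechanical consequence of the contacts all sharing one speed.
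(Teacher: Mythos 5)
Your conclusions are the right ones, and your final gluing step is sound: since every state appearing in $I_3$ has the same speed $v(\bar\rho_3,\bar\eta_3)$, all interfaces in $I_3$ are parallel, non-interacting second-family contacts, and the value at $(t,x)$ is the trace emitted at the time $s=t-x/v(\bar\rho_3,\bar\eta_3)$. Transporting the green intervals through this change of variable is exactly the right bookkeeping; in fact it shows that the strips making up $A_2^\ell$ should have width $\tau_2^\ell$, not $\tau_1^\ell$, so the $\tau_1^\ell$ in the last inequality defining $A_2^\ell$ is evidently a misprint for $\tau_2^\ell$ (the two coincide only when $\tau_1^\ell=\tau_2^\ell$; otherwise $A_1^\ell\cup A_2^\ell$ would not cover the region $0<x<v(\bar\rho_3,\bar\eta_3)\,t$). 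Where you differ from the paper is in organization: the paper argues by induction on the light phases, at each switching time solving explicitly the classical Riemann problem on the road obtained by gluing the green incoming road with $I_3$ (left datum the current incoming trace, right datum the current state adjacent to the junction in $I_3$), and reading off from case (2) of Section~\ref{sec:Des} that the solution is a negative-speed first-family wave entering the incoming road plus a second-family contact at speed $v(\bar\rho_3,\bar\eta_3)$ entering $I_3$, with middle state $(\rho_i^\sharp,\eta_i^\sharp)$; during red phases the zero-flux condition gives the incoming trace $(R,R\bar w_i)$. You replace this explicit bookkeeping by an invariants-plus-conservation argument that pins the traces directly.

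As written, however, your pinning argument has a genuine gap. In your first step you enumerate only first- and second-family waves and conclude from \textbf{(H-3)} that the downstream state can be reached only through a second-family contact. This omits phase-transition waves, which connect a left state in $F$ to a right state in $C$ and whose Rankine--Hugoniot speed can be positive; such a wave could legitimately enter $I_3$ if the trace at $x=0^+$ were in $F$. Ruling that out requires knowing that the trace never leaves $C$ --- but that is precisely what your second step is supposed to establish, and your second step invokes the pinned speed from the first step, so the argument is circular as it stands. (The same issue hides in your admission that one must ``check that the junction solver selects exactly $(\rho_i^\sharp,\eta_i^\sharp)$ and emits no additional wave into $I_3$'': that check is the actual content of the proof, and you defer it.) The paper breaks the circle by the induction above: at every switching time both data of the glued Riemann problem lie in $C$ --- the incoming trace is $(\bar\rho_i,\bar\eta_i)$, or $(R,R\bar w_i)$, or a previously produced $\sharp$-state --- and for $C$--$C$ data the Riemann solution of Section~\ref{sec:Des} never leaves $C$, so no phase-transition wave is ever created. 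Two smaller points should also be secured: your $\eta$-flux argument giving $w=\bar w_i$ at the trace requires the common flux to be nonzero, i.e. $v(\bar\rho_3,\bar\eta_3)>0$ (for $\bar\rho_3=R$ the statement itself degenerates); and the claim that the incoming trace keeps $w=\bar w_i$ needs the observation that interactions inside $I_i$ (the red-phase shock against the subsequent green-phase rarefaction) generate no second-family waves travelling back towards the junction --- true here because all states in $I_i$ lie on the single ray $w=\bar w_i$, along which first-family shock and rarefaction curves coincide.
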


\begin{proof}
  In the time interval $[0,\tau_{1}^\ell[$ the traffic lights is green 
  in the first incoming road; this permits to study the Riemann problem
  as a classical one considering a unique road given
  by the union of the $I_1$ and $I_3$:
  the classic Riemann problem between $(\bar \rho_{1}, \bar \eta_{1})$
  and $\left(\bar \rho_3, \bar \eta_3\right)$
  produces a first family wave between $(\bar \rho_{1}, \bar \eta_{1})$
  and $\left(\rho_1^\sharp, \eta_1^\sharp\right)$ and a second family wave between $\left(\rho_1^\sharp, \eta_1^\sharp\right)$
  and $\left(\bar \rho_3, \bar \eta_3\right)$, see Figure~\ref{fig:tau1}. 
  In $I_2$, instead, the flow at the junction is equal to zero and
  so the trace of the solution is $\left(R,R \bar w_2\right)$.
  The solution in the road $I_2$ is given by
  a shock wave of the first family connecting
  $\left(\bar \rho_2, \bar \eta_2\right)$ to $\left(R,R \bar w_2\right)$;
  see Figure~\ref{fig:tau11}.
  \begin{figure}[t!]
    \centering
    \input{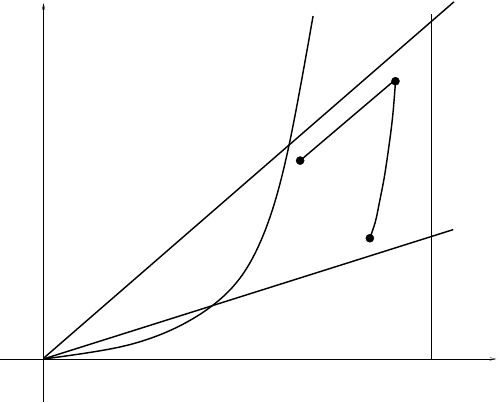tex_t}
    \hfil
    \input{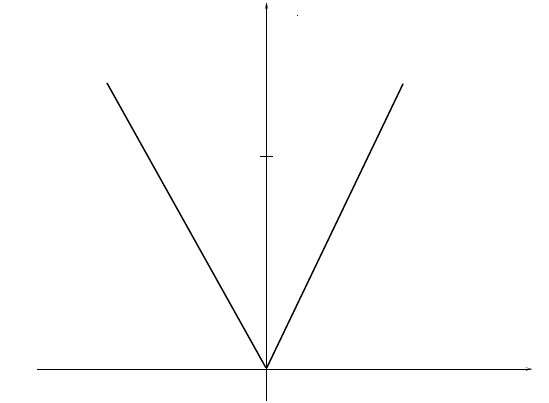tex_t}
    \caption{The situation of Lemma~\ref{Lemma1} in the time interval $[0,\tau_{1}^\ell[$ for the first incoming road and the
      outgoing road in the coordinates, from left to right, $(\rho,\eta)$ and
      $(x, t)$.}\label{fig:tau1}
  \end{figure}  
  \begin{figure}[t!]
    \centering \input{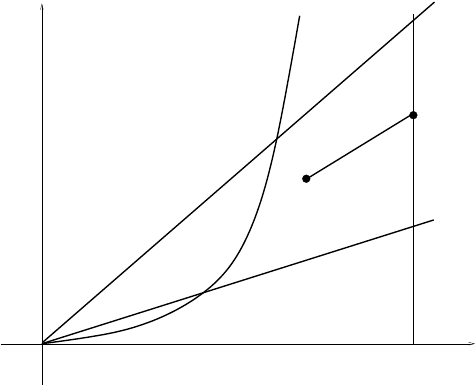tex_t} \hfil
    \input{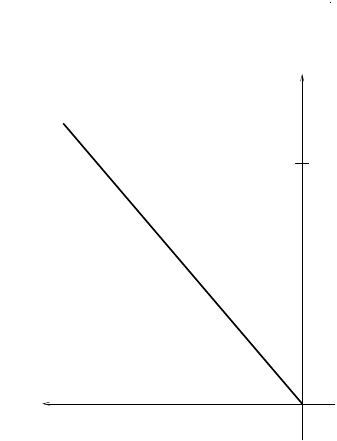tex_t}
    \caption{The situation of Lemma~\ref{Lemma1} in the time interval $[0,\tau_{1}^\ell[$ for the second incoming road in
      the coordinates, from left to right, $(\rho,\eta)$ and
      $(x, t)$.}\label{fig:tau11}
  \end{figure}

  At time $t = \tau_1^\ell$, the traffic lights becomes red for the road $I_1$
  and green for $I_2$. This situation remains constant in the whole
  time interval $[\tau_{1}^\ell,\tau_{1}^\ell+\tau_{2}^\ell[$; this permits
  to study the Riemann problem
  as a classical one considering a unique road given
  by the union of the $I_2$ and $I_3$; see Figure~\ref{fig:tau2}.
  We have to solve the Riemann problem between
  $\left(R,R \bar w_2\right)$ and $\left(\rho_1^\sharp, \eta_1^\sharp\right)$.
  The solution is given by a rarefaction curve of the first family
  between $(R, R \bar w_2)$ and $\left(\rho_2^\sharp, \eta_2^\sharp\right)$
  followed by a second family wave between $\left(\rho_2^\sharp, \eta_2^\sharp\right)$ and $\left(\rho_1^\sharp, \eta_1^\sharp\right)$, see Figure~\ref{fig:tau2}. 
  In $I_1$, instead, the flow at the junction is equal to zero and
  so the trace of the solution is $\left(R,R \bar w_1\right)$.
  More precisely a shock wave of the first family starts from the point
  $\left(\tau_1^\ell, 0\right)$ connecting the states
  $\left(\rho_1^\sharp, \eta_1^\sharp\right)$ and $\left(R,R \bar w_1\right)$;
  see Figure~\ref{fig:tau22}.
  \begin{figure}[t!]
    \centering \input{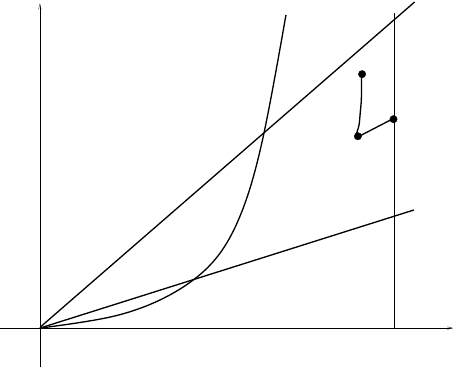tex_t} \hfil
    \input{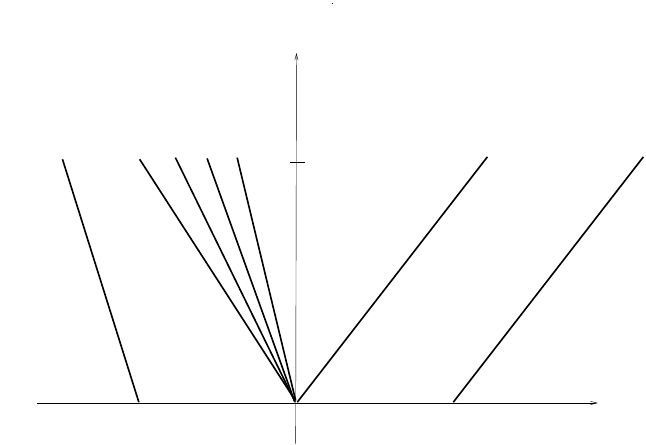tex_t}
    \caption{The situation of Lemma~\ref{Lemma1} in the time interval $[\tau_{1}^\ell,\tau_{1}^\ell+\tau_{2}^\ell[$ for the second
      incoming road and the outgoing road in the coordinates, from
      left to right, $(\rho,\eta)$ and $(x, t)$.}\label{fig:tau2}
  \end{figure}
  \begin{figure}[t!]
    \centering \input{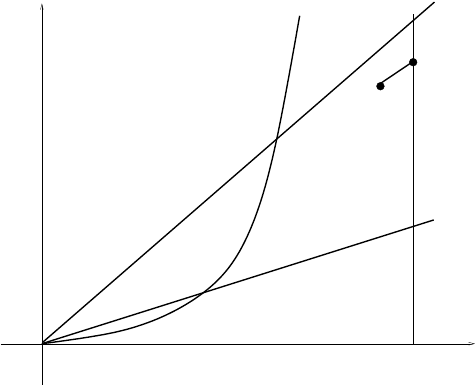tex_t} \hfil
    \input{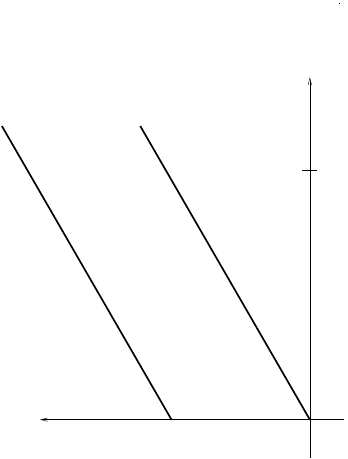tex_t}
    \caption{The situation of Lemma~\ref{Lemma1} in the time interval $[\tau_{1}^\ell,\tau_{1}^\ell+\tau_{2}^\ell[$ for the first
      incoming road in the coordinates, from left to right,
      $(\rho,\eta)$ and $(x, t)$.}\label{fig:tau22}
  \end{figure}

  Similarly, in the time interval
  $[\tau_{1}^\ell + \tau_{2}^\ell, 2\tau_{1}^\ell + \tau_{2}^\ell[$,
  the traffic light is green for road $I_1$ and red for $I_2$;
  so we need to consider a Riemann
  problem between
  $(R,R \bar w_1)$ and $\left(\rho_2^\sharp, \eta_2^\sharp\right)$,
  see Figure~\ref{fig:tau3}. 
  The solution consists in a rarefaction curve of the first family
  between $(R,R \bar w_1)$ and $\left(\rho_1^\sharp, \eta_1^\sharp\right)$
  followed by a second family wave between
  $\left(\rho_1^\sharp, \eta_1^\sharp\right)$ and $\left(\rho_2^\sharp, \eta_2^\sharp\right)$.
  The situation of $I_2$ is analogous to that represented
  in Figure~\ref{fig:tau11}. More precisely at the point
  $\left(\tau_{1}^\ell + \tau_{2}^\ell, 0\right)$ a shock wave with negative
  speed is generated and it connects $\left(\rho_2^\sharp, \eta_2^\sharp\right)$
  with $\left(R,R \bar w_2\right)$.
  
  \begin{figure}[t!]
    \centering \input{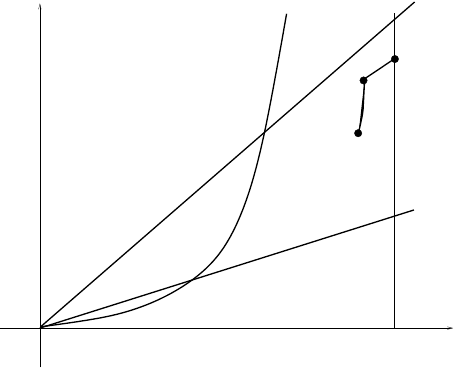tex_t} \hfil
    \input{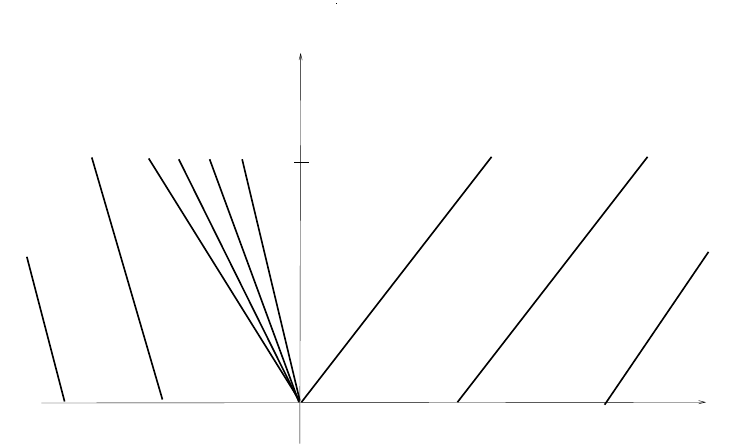tex_t}
    \caption{The situation of Lemma~\ref{Lemma1} in the time interval $[\tau_{1}^\ell + \tau_{2}^\ell, 2\tau_{1}^\ell + \tau_{2}^\ell[$ for the first incoming road and the outgoing road in the coordinates,
      from left to right, $(\rho,\eta)$ and $(x, t)$.}\label{fig:tau3}
  \end{figure}
  \begin{figure}[t!]
    \centering \input{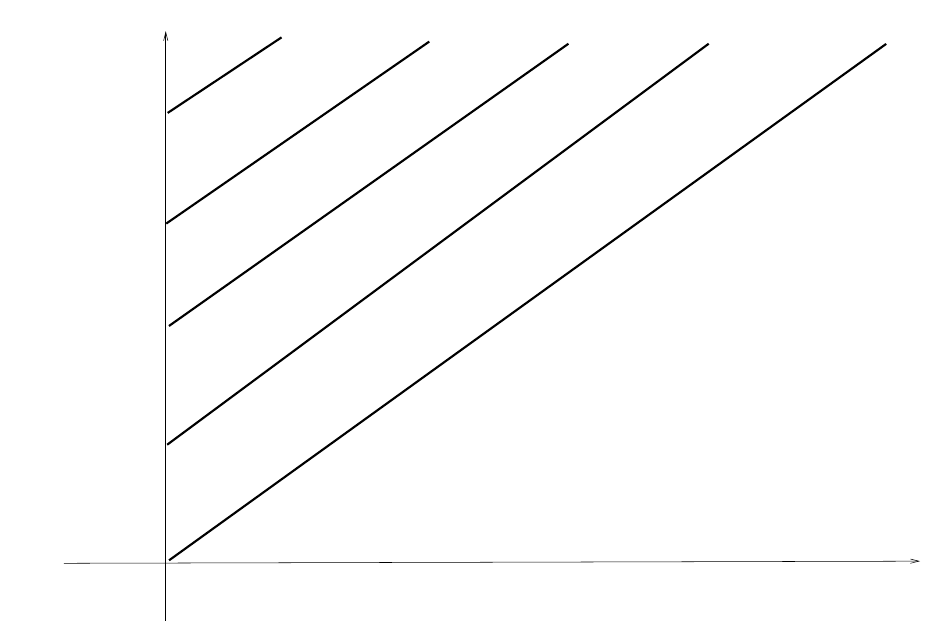tex_t}
    \caption{The situation of Lemma~\ref{Lemma1} in the time interval $[0,2\tau_{1}^\ell+2\tau_{2}^\ell[$ for the outgoing road
      in the coordinates $(x, t)$. The two states $\left(\rho_1^\sharp, \eta_1^\sharp\right)$ and $\left(\rho_2^\sharp, \eta_2^\sharp\right)$, separated by second family waves, alternate periodically.}\label{fig:tau*}
  \end{figure}

  We proceed in the same way until we arrive at time $t = T$.
  In this way we deduce that
  the solution in $I_3$ is given by~(\ref{eq:sol_I_3_case1});
  see Figure~\ref{fig:tau*}. 
\end{proof}

\begin{lemma}
\label{Lemma2}
  Assume that the initial conditions satisfy
  \begin{equation*}
    \left(\bar \rho_{1}, \bar \eta_1\right) \in F, \quad
    \left(\bar \rho_{2}, \bar \eta_2\right) \in C, \quad
    \left(\bar \rho_{3}, \bar \eta_3\right) \in C.
  \end{equation*}
  Then either the solution in the outgoing road $I_3$ is given
  by~(\ref{eq:sol_I_3_case1}) or has a structure similar
  to~(\ref{eq:sol_I_3_case1})
  except for a set, whose Lebesgue measure is bounded by a constant times
  $\frac{1}{\ell^2}$.
\end{lemma}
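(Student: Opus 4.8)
The plan is to localize the discrepancy with Lemma~\ref{Lemma1} to the first green phase of $I_1$ and then to control the transient created by the free datum. Indeed, the only change with respect to Lemma~\ref{Lemma1} is that $(\bar\rho_1,\bar\eta_1)$ lies in $F$ instead of $C$; but once $I_1$ has been red for the first time its trace at the junction is the jammed state $(R,R\bar w_1)\in C$, so from the second green phase of $I_1$ on, the data seen by the junction are exactly those of Lemma~\ref{Lemma1}. Hence every difference must be produced during $[0,\tau_1^\ell[$, and since $\tau_1^\ell=\frac{\sigma_1}{\sigma_1+1}\frac{T}{\ell}=O(1/\ell)$, this is already a region of the right temporal scale.

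First I would solve, treating $I_1\cup I_3$ as a single road, the Riemann problem between $(\bar\rho_1,\bar\eta_1)\in F$ and $(\bar\rho_3,\bar\eta_3)\in C$. By the fourth case of the single-road construction this gives a phase transition wave from $(\bar\rho_1,\bar\eta_1)$ to $(\rho_1^\sharp,\eta_1^\sharp)$ followed by a second family wave from $(\rho_1^\sharp,\eta_1^\sharp)$ to $(\bar\rho_3,\bar\eta_3)$; the latter is the leading front $x=v(\bar\rho_3,\bar\eta_3)\,t$, identical to the one in Lemma~\ref{Lemma1}. Everything then depends on the sign of the Rankine--Hugoniot speed $s=\frac{\rho_1^\sharp\,v(\bar\rho_3,\bar\eta_3)-\bar\rho_1 V_{\max}}{\rho_1^\sharp-\bar\rho_1}$ of the phase transition wave. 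If $s\le0$ the phase transition wave travels into $I_1$, the trace on the $I_3$--side of the junction equals $(\rho_1^\sharp,\eta_1^\sharp)$ throughout $[0,\tau_1^\ell[$, and the proof of Lemma~\ref{Lemma1} applies word for word: the solution in $I_3$ is exactly~(\ref{eq:sol_I_3_case1}). This is the first alternative.

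If instead $s>0$, the free state $(\bar\rho_1,\bar\eta_1)$ leaks into $I_3$ and fills the wedge $\{0<x<s\,t,\ 0<t<\tau_1^\ell\}$, whose area equals $\frac12 s(\tau_1^\ell)^2=O(1/\ell^2)$. I would then trace its resolution: at $t=\tau_1^\ell$ the green switches to $I_2$, and the Riemann problem between $(R,R\bar w_2)$ and the free trace $(\bar\rho_1,\bar\eta_1)$ produces a first family rarefaction into $I_2$ together with a linear wave of speed $V_{\max}$ into $I_3$. Because $V_{\max}>s$, this linear wave overtakes the phase transition wave at a time $\frac{V_{\max}}{V_{\max}-s}\tau_1^\ell=O(1/\ell)$, and the ensuing interaction creates the congested state $(\rho_2^\sharp,\eta_2^\sharp)$ and re-establishes the second family wave pattern of~(\ref{eq:sol_I_3_case1}).

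The hard part is to bound the exceptional set by $C/\ell^2$ and to show it does not propagate into the far field. Two facts drive the confinement. First, all wave speeds are uniformly bounded, so the perturbation cannot escape the spacetime box $[0,C/\ell]\times[0,C/\ell]$ before it is resolved. Second, any phase transition wave connecting a point of $F\cap C$ to a congested point with the same $w$ (for instance $(\rho_2^\flat,\eta_2^\flat)$ to $(\rho_2^\sharp,\eta_2^\sharp)$) has non-positive speed: its speed is the chord slope of $\rho\mapsto w\,\rho\,\psi(\rho)$, which by the concavity in~\textbf{(H-2)} and the negativity of $\lambda_1$ in~\textbf{(H-3)} is bounded above by $\lambda_1$ at the $F\cap C$ endpoint, hence $\le0$. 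Thus any residual free material is pushed back toward the junction rather than advected along $I_3$. Combining the two, I would confine the whole transient to a set of measure $O(1/\ell^2)$; the remaining technical point, and the main obstacle, is to check that the finitely many wave interactions near the junction during the first cycle terminate once $I_1$ jams, so that the junction resumes the alternating emission of second family waves and the solution coincides with~(\ref{eq:sol_I_3_case1}) outside that set.
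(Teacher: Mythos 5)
Your proposal follows essentially the same route as the paper's proof: the same dichotomy on the sign of the Rankine--Hugoniot speed of the phase transition wave emitted during $[0,\tau_1^\ell[$, the same resolution of the positive-speed case (the linear wave of speed $V_{\max}$ generated at $t=\tau_1^\ell$ overtakes the shock at a time of order $1/\ell$, producing $\left(\rho_2^\sharp,\eta_2^\sharp\right)$ and restoring the pattern of~(\ref{eq:sol_I_3_case1})), and the same $\mathcal{O}(1/\ell^2)$ area bound for the exceptional set. The ``remaining technical point'' you flag --- that the cascade of negative-speed first family waves created by subsequent interactions is eventually absorbed --- is handled at the same level of rigor in the paper, which simply asserts absorption at a finite time $t_*$ and bounds the area of the resulting triangle by $K_3/\ell^2$.
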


\begin{proof}
  We proceed in the same way as the previous Lemma~\ref{Lemma1}.
  In the time interval $[0,\tau_{1}^\ell[$ the traffic lights is green 
  in the first incoming road and we study the Riemann problem
  as a classical one considering a unique road given
  by the union of the $I_1$ and $I_3$. We have two possible cases: the Riemann problem between $(\bar \rho_{1}, \bar \eta_{1})$ and $\left(\bar \rho_3, \bar \eta_3\right)$
  produces a shock wave with negative speed between $(\bar \rho_{1}, \bar \eta_{1})$
  and $\left(\rho_1^\sharp, \eta_1^\sharp\right)$ and a second family wave between $\left(\rho_1^\sharp, \eta_1^\sharp\right)$ and $\left(\bar \rho_3, \bar \eta_3\right)$, or produces a shock wave with positive speed between $(\bar \rho_{1}, \bar \eta_{1})$ and $\left(\rho_1^\sharp, \eta_1^\sharp\right)$ and a second family wave between $\left(\rho_1^\sharp, \eta_1^\sharp\right)$ and $\left(\bar \rho_3, \bar \eta_3\right)$, see Figure~\ref{fig:rhov}. 
  In $I_2$ the situation is the same as that of the previous Lemma~\ref{Lemma1}: the flow at the junction is equal to zero and the solution is given by a shock wave of the first family connecting $\left(\bar \rho_2, \bar \eta_2\right)$ to $\left(R,R \bar w_2\right)$, see Figure~\ref{fig:tau11}.
  \begin{figure}[t!]
    \centering
    \input{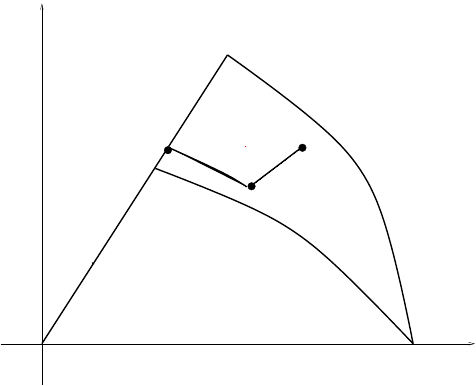tex_t}
    \hfil
    \input{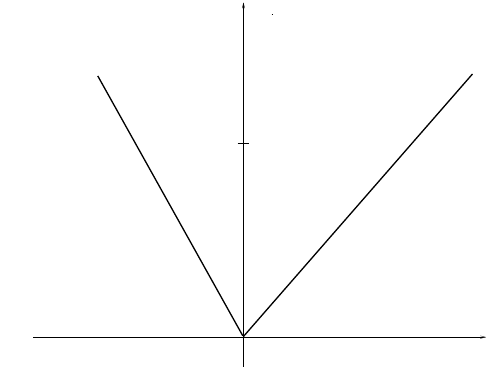tex_t}
    \hfil 
    \input{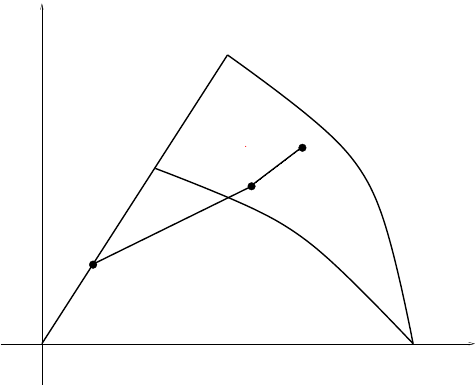tex_t}
    \hfil
    \input{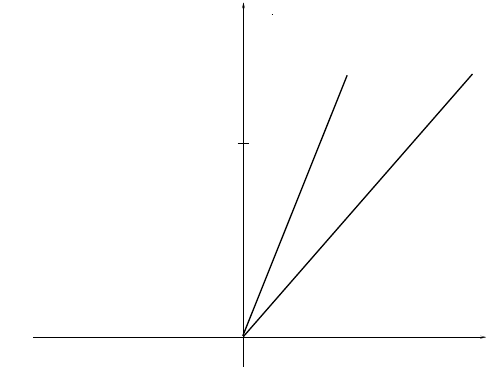tex_t}
    \caption{The situation of Lemma~\ref{Lemma2} in the time interval $[0,\tau_{1}^\ell[$ for the first incoming road and the
      outgoing road in the coordinates, from left to right, $(\rho,\rho v)$ and
      $(x, t)$. Above, the first case, a shock with negative speed; below, the second case, a shock with positive speed.}\label{fig:rhov}
  \end{figure}  
\begin{itemize}
\item \textit{First case: a shock with negative speed.}
In this case, from time $\tau_{1}^\ell$ to time $t = T$, the solution becomes as that described in the previous Lemma~\ref{Lemma1} and it is given by~(\ref{eq:sol_I_3_case1}); see Figure~\ref{fig:tau*}.
  
\item \textit{Second case: a shock with positive speed.} In the time interval $[\tau_{1}^\ell,\tau_{1}^\ell+\tau_{2}^\ell[$ the traffic light is red for the road $I_1$
  and green for $I_2$ and we study the Riemann problem
  as a classical one considering a unique road given
  by the union of the $I_2$ and $I_3$. We have to solve the Riemann problem between
  $\left(R,R \bar w_2\right)$ and $(\bar \rho_{1}, \bar \eta_{1})$.
  The solution is given by a rarefaction curve of the first family
  between $(R, R \bar w_2)$ and
  $\left(\rho_2^\flat, \eta_2^\flat\right)\in F \cap C$
  followed by a linear wave wave between
  $\left(\rho_2^\flat, \eta_2^\flat \right)$ and
  $(\bar \rho_{1}, \bar \eta_{1})$. At time $t=\bar t_1$,
  the linear wave generated at the time $t=\tau_{1}^\ell$ between
  $\left(\rho_2^\flat, \eta_2^\flat\right)$
  and $(\bar \rho_{1}, \bar \eta_{1})$ interacts with the shock with positive speed genearated at time $t=0$ between $(\bar \rho_{1}, \bar \eta_{1})$ and $\left(\rho_1^\sharp, \eta_1^\sharp\right)$, see Figure~\ref{fig:part}. The intersection point is determined by solving the system:
  \begin{equation}
    \label{eq:Iintersection}
    \left\{
      \begin{array}{l}
        x(t)= v_s t
        \\
        x(t)=V_{\max}(t-\tau_{1}^\ell)\,,
      \end{array}
    \right.
  \end{equation}
where $v_s=\frac{\bar \rho_{1} V_{\max}- \rho_1^\sharp v(\rho_1^\sharp,\eta_1^\sharp) }{\bar \rho_{1} - \rho_1^\sharp}$.  We denote the intersection point by
  \begin{equation}
    \label{eq:PI}
    \left(\bar t_1,\bar x_1\right)
    = \left( \frac{V_{\max} \tau_{1}^\ell}{V_{\max}- v_s},
    v_s \frac{V_{\max} \tau_{1}^\ell}{V_{\max}- v_s} \right) \,.
  \end{equation}  
At this point we have to solve a Riemann problem
between $\left(\rho_2^\flat, \eta_2^\flat\right)$
and $\left(\rho_1^\sharp, \eta_1^\sharp\right)$ that generates a first family
wave between $\left(\rho_2^\flat, \eta_2^\flat\right)$
and $\left(\rho_2^\sharp, \eta_2^\sharp\right)\in C$
and a second family wave between $\left(\rho_2^\sharp, \eta_2^\sharp\right)$
and $\left(\rho_1^\sharp, \eta_1^\sharp\right)$, see Figure~\ref{fig:part}.
 \begin{figure}[t!]
    \centering
    \input{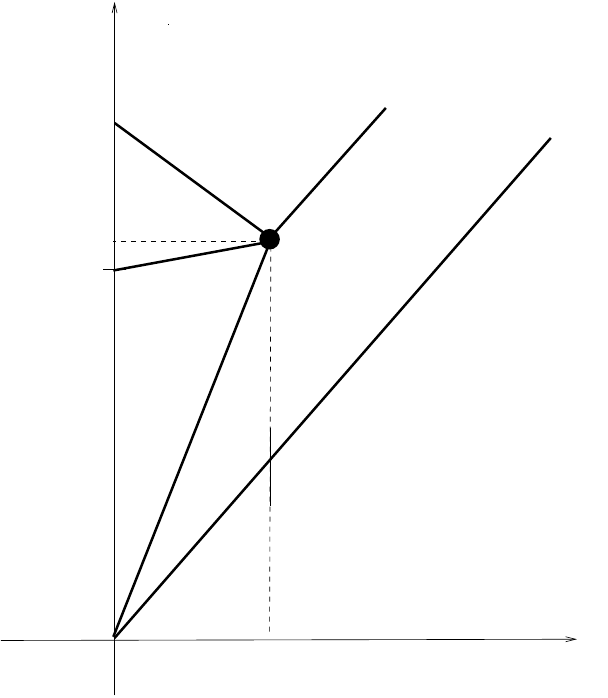tex_t}
    \caption{The situation of Lemma~\ref{Lemma2} in the time interval  $[0,\bar t_2[$ for the outgoing road in the coordinates $(x, t)$: the interaction between the shock with positive speed genearated at time $t=0$ between $(\bar \rho_{1}, \bar \eta_{1})$ and $\left(\rho_1^\sharp, \eta_1^\sharp\right)$ and the linear wave generated at the time $t=\tau_{1}^\ell$ between $\left(\rho_m, \eta_m\right)$ and $(\bar \rho_{1}, \bar \eta_{1})$.}\label{fig:part}
  \end{figure}   

The first family wave between
$\left(\rho_2^\flat, \eta_2^\flat\right)$ and
$\left(\rho_2^\sharp, \eta_2^\sharp\right)$ could interact
again at a time $t=t_2$ with the linear wave generated at time
$t=\tau_{2}^\ell$ when the traffic lights is red for the road $I_2$
and green for $I_1$, producing again a first family wave and a second
family wave. 
A first family wave with negative speed could be produced at each interaction up to a time $t=t_*$, when it is absorbed and the solution becomes as that described in the previous Lemma~\ref{Lemma1} and it is given by~(\ref{eq:sol_I_3_case1}) except for a set, whose Lebesgue measure is bounded by a constant times $\frac{1}{\ell^2}$. 
Indeed, if for example we suppose that the first famiy wave
is absorbed at time $t_*=\bar t_2$ and, denoting with $\mathcal L$ the Lebesgue measure of a set, we estimate the area of ​​the triangle $A^\ell$ generated up to time $t=\bar t_2$, see Figure~\ref{fig:part}. By posing $\bar t_1=\frac{K_{1}}{\ell}$ and $\bar t_2= K_{2}\bar x_1$, we have:
  \begin{equation}
    \label{eq:AT}
    \mathcal L (A^\ell)=\frac{1}{2}v_s\frac{K_{1}}{\ell}\left(\frac{K_{1}}{\ell}+ K_{2}v_s\frac{K_{1}}{\ell}\right)  =\frac{K_{3}}{\ell^{2}} \,,
\end{equation}
\end{itemize}
for $K_{1},K_{2},K_{3}$ positive costants.
\end{proof}

\begin{lemma}
\label{Lemma3}
  Assume that the initial conditions satisfy
  \begin{equation*}
    \left(\bar \rho_{1}, \bar \eta_1\right) \in F, \quad
    \left(\bar \rho_{2}, \bar \eta_2\right) \in C, \quad
    \left(\bar \rho_{3}, \bar \eta_3\right) \in F.
  \end{equation*}
  Then the solution in the outgoing road $I_3$ is
  \begin{equation}
    \label{eq:I_3_Lemma3}
    \left(\rho_{\ell, 3}, \eta_{\ell, 3}\right) (t,x) =
    \left\{
      \begin{array}{l@{\quad}l}
      \left(\bar \rho_3, \bar \eta_3\right),
        &
          \textrm{ if }\, 0 < t < T, \quad x > V_{\max} t,
          \\
        \left(\bar \rho_1, \bar \eta_1\right),
        &
          \textrm{ if }
          \left\{          
          \begin{array}{l}
            0 < t < T,\, x < V_{\max} t,
            \\
            x > \max \left\{0, V_{\max} \left(t - \tau_1^\ell\right)\right\},
          \end{array}
        \right.        \\
        \left(\rho_{2}^\flat, \eta_2^\flat\right),
        &
          \textrm{ if }\, (t, x) \in A_1^\ell,
        \\
        \left(\rho_{1}^\flat, \eta_1^\flat\right),
        &
          \textrm{ if }\, (t, x) \in A_2^\ell,
      \end{array}
    \right.
  \end{equation}
  where
  \begin{equation}
    \label{eq:A_1}
    A_1^\ell = \bigcup_{i=0}^{\ell - 1} \left\{(t, x):\,
      \begin{array}{c}
        0 < t < T, \quad x > 0 
        \\
        t - \tau_1^\ell - i \frac{T}{\ell} 
        < \frac{x}{V_{max}}
        < t - i \frac{T}{\ell}
      \end{array}
    \right\}
  \end{equation}
  and
  \begin{equation}
    \label{eq:A_2}
    A_2^\ell = \bigcup_{i=1}^{\ell} \left\{(t, x):\,
      \begin{array}{c}
        0 < t < T, \quad x > 0 
        \\
        t - i \frac{T}{\ell} 
        < \frac{x}{V_{max}}
        < t - i \frac{T}{\ell} + \tau_1^\ell
      \end{array}
    \right\};
  \end{equation}

\end{lemma}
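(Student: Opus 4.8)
The plan is to iterate the single--road Riemann solver of Section~\ref{sec:Des} over the successive green/red windows, exactly in the spirit of the proof of Lemma~\ref{Lemma1}, the only change being that the middle states now sit on $F\cap C$ instead of in the interior of $C$. The guiding observation, which is also what will make the resulting formula exact (no error term), is that every state that is ever fed into the outgoing road $I_3$ belongs to the free phase $F$; consequently all the waves visible inside $I_3$ are linear waves of the single speed $V_{\max}$.

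First I would dispatch the initial window $[0,\tau_1^\ell)$, where $I_1$ has the green. Merging $I_1$ and $I_3$, the data $(\bar\rho_1,\bar\eta_1)\in F$ and $(\bar\rho_3,\bar\eta_3)\in F$ fall under case~(1) of the single--road solver: a single linear wave of speed $V_{\max}$ separates them, so $(\bar\rho_3,\bar\eta_3)$ persists for $x>V_{\max}t$ while $(\bar\rho_1,\bar\eta_1)$ fills $0<x<V_{\max}t$. Meanwhile $I_2$ is red, the flux at the node vanishes, and a first--family shock joins $(\bar\rho_2,\bar\eta_2)$ to the jam state $(R,R\bar w_2)$, precisely as in Lemma~\ref{Lemma1} (Figure~\ref{fig:tau11}). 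This accounts for the first two lines of~(\ref{eq:I_3_Lemma3}) and for the leading $(\bar\rho_1,\bar\eta_1)$--strip.

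The inductive step is the core of the argument. Every green phase after the first begins with the active incoming road $I_j$ jammed at $(R,R\bar w_j)\in C$, since the preceding red phase instantaneously installs the jam at the node; the trace of $I_3$ at the node is some free state. Merging $I_j$ and $I_3$ then produces a case~(3) Riemann problem (left state in $C$, right state in $F$), whose solution is a first--family rarefaction running from $(R,R\bar w_j)$ down the ray $\eta=\bar w_j\rho$ to its exit point $(\rho_j^\flat,\eta_j^\flat)\in F\cap C$, followed by a linear wave; this is exactly the computation already carried out in the second case of Lemma~\ref{Lemma2}. By \textbf{(H-3)} the rarefaction has negative speed and hence stays inside $I_j$, while the linear wave transports $(\rho_j^\flat,\eta_j^\flat)$ into $I_3$ at speed $V_{\max}$. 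The decisive point is that the injected middle state depends only on $\bar w_j$ and on $v=V_{\max}$, and not on the current $I_3$ trace; therefore every green phase of $I_2$ injects $(\rho_2^\flat,\eta_2^\flat)$ and every green phase of $I_1$ after the first injects $(\rho_1^\flat,\eta_1^\flat)$.

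Finally I would assemble the contributions and let the induction run to $t=T$. Since all interfaces created in $I_3$ are linear waves of the one speed $V_{\max}$, they are mutually parallel and never collide, so each injected state is merely carried rigidly at $V_{\max}$ and occupies the slanted strip fixed by its entry--time window (analogously to Figure~\ref{fig:tau*}). Grouping the $(\rho_2^\flat,\eta_2^\flat)$--strips into $A_1^\ell$ and the $(\rho_1^\flat,\eta_1^\flat)$--strips into $A_2^\ell$, and appending the leading $(\bar\rho_1,\bar\eta_1)$--strip together with the region $x>V_{\max}t$ carrying $(\bar\rho_3,\bar\eta_3)$, yields~(\ref{eq:I_3_Lemma3}). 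The hard part will be verifying that no wave other than a $V_{\max}$--contact is ever launched into $I_3$, for this is what excludes the wave interactions that in Lemma~\ref{Lemma2} forced an $\ell^{-2}$ error term. Here it is immediate: because the downstream datum $(\bar\rho_3,\bar\eta_3)$ lies in $F$ (rather than in $C$, as in Lemma~\ref{Lemma2}), the opening Riemann problem is of type~(1) rather than a phase transition, so no positive--speed shock is ever created in $I_3$ and the parallel $V_{\max}$--contacts cannot interact.
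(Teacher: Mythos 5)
Your proof is correct and takes essentially the same route as the paper's: a window-by-window application of the single-road Riemann solver, in which each red phase installs the jam state $\left(R, R\bar w_j\right)$ at the node and each subsequent green phase injects the corresponding $\left(\rho_j^\flat, \eta_j^\flat\right)$ into $I_3$ behind a linear wave of speed $V_{\max}$. Your explicit statement of the invariant (every trace fed into $I_3$ lies in $F$, so all waves in $I_3$ are parallel $V_{\max}$ contacts that never interact, which is why the formula is exact here unlike in Lemma~\ref{Lemma2}) is precisely what the paper's proof uses implicitly when it concludes ``we proceed in the same way until we arrive at time $t=T$.''
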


\begin{proof}
 We proceed in the same way of the previous lemmas.
 In the time interval $[0,\tau_{1}^\ell[$ the traffic lights is green 
 in the first incoming road and we study the Riemann problem
 as a classical one considering a unique road given
 by the union of the $I_1$ and $I_3$:
 the Riemann problem between $(\bar \rho_{1}, \bar \eta_{1})$
 and $\left(\bar \rho_3, \bar \eta_3\right)$
 produces a linear wave between $(\bar \rho_{1}, \bar \eta_{1})$
 and  $\left(\bar \rho_3, \bar \eta_3\right)$, see
 Figure~\ref{fig:tau1_Lemma3}. 
 In $I_2$, the flow at the junction is equal to zero and the solution in
 the road $I_2$ is given by a shock wave of the first family connecting
 $\left(\bar \rho_2, \bar \eta_2\right)$ to $\left(R,R \bar w_2\right)$,
 see Figure~\ref{fig:tau11_Lemma3}.
  \begin{figure}[t!]
    \centering
    \input{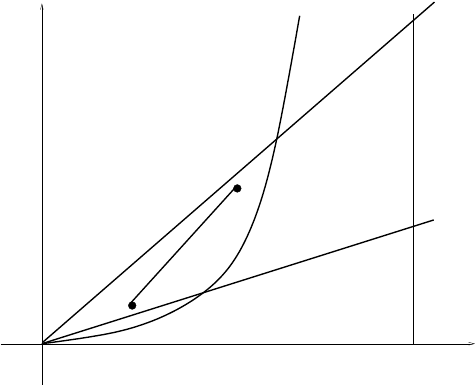tex_t}
    \hfil
    \input{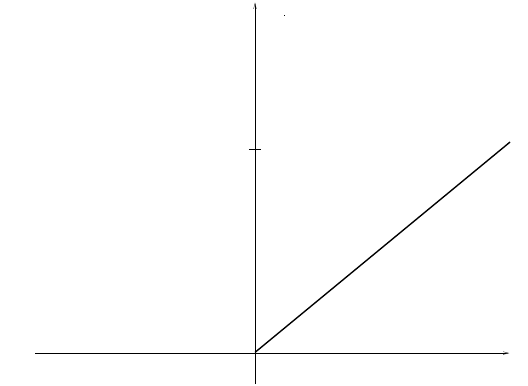tex_t}
    \caption{The situation of Lemma~\ref{Lemma3} in the time interval $[0,\tau_{1}^\ell[$ for the first incoming road and the
      outgoing road in the coordinates, from left to right, $(\rho,\eta)$ and
      $(x, t)$.}\label{fig:tau1_Lemma3}
  \end{figure}  
  \begin{figure}[t!]
    \centering \input{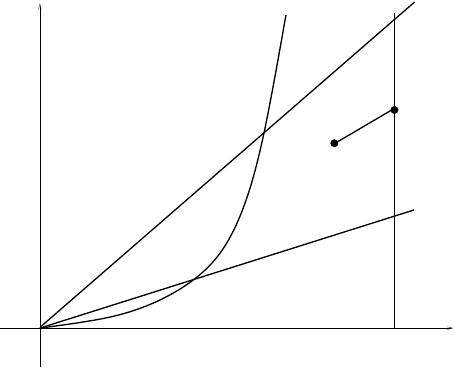tex_t} \hfil
    \input{rhoeta_tau1111.pdftex_t}
    \caption{The situation of Lemma~\ref{Lemma3} in the time interval $[0,\tau_{1}^\ell[$ for the second incoming road in
      the coordinates, from left to right, $(\rho,\eta)$ and
      $(x, t)$.}\label{fig:tau11_Lemma3}
  \end{figure}

  In the time interval $[\tau_{1}^\ell,\tau_{1}^\ell+\tau_{2}^\ell[$, the traffic lights becomes red for the road $I_1$
  and green for $I_2$ and the solution of the Riemann problem in the unique road given
  by the union of the $I_2$ and $I_3$ between
  $\left(R,R \bar w_2\right)$ and $(\bar \rho_{1}, \bar \eta_{1})$ is given by a rarefaction curve of the first family
  between $(R, R \bar w_2)$ and $\left(\rho_2^\flat, \eta_2^\flat\right)$
  followed by a linear wave between $\left(\rho_2^\flat, \eta_2^\flat\right)$ and $(\bar \rho_{1}, \bar \eta_{1})$, see Figure~\ref{fig:tau2_Lemma3}. 
  In $I_1$, instead, the flow at the junction is equal to zero and
  so the trace of the solution is $\left(R,R \bar w_1\right)$,
  see Figure~\ref{fig:tau22_Lemma3}. More precisely a shock wave with negative speed
  starts from the point $\left(\tau_1^\ell, 0\right)$ connecting the states
  $(\bar \rho_{1}, \bar \eta_{1})$ and $\left(R,R \bar w_1\right)$.
  \begin{figure}[t!]
    \centering \input{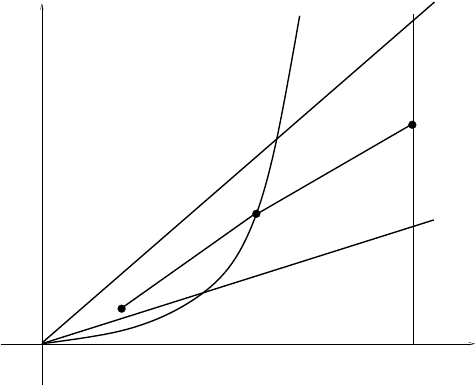tex_t} \hfil
    \input{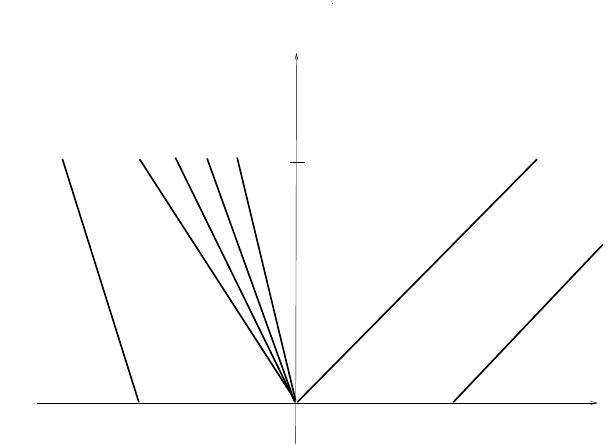tex_t}
    \caption{The situation of Lemma~\ref{Lemma3} in the time interval $[\tau_{1}^\ell,\tau_{1}^\ell+\tau_{2}^\ell[$ for the second
      incoming road and the outgoing road in the coordinates, from
      left to right, $(\rho,\eta)$ and $(x, t)$.}\label{fig:tau2_Lemma3}
  \end{figure}
  \begin{figure}[t!]
    \centering \input{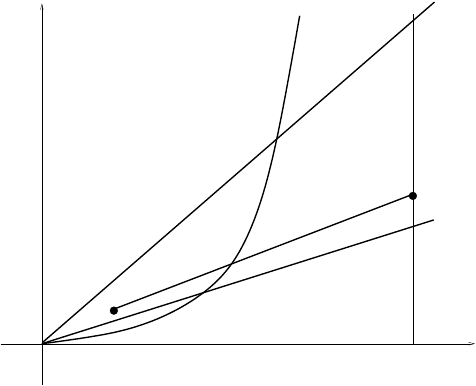tex_t} \hfil
    \input{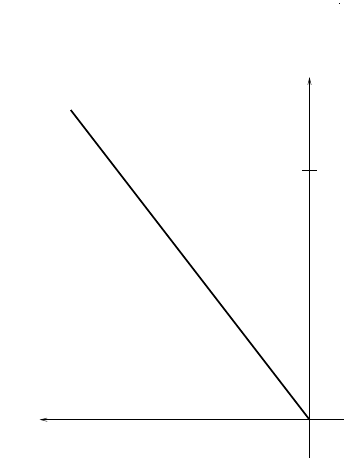tex_t}
    \caption{The situation of Lemma~\ref{Lemma3} in the time interval $[\tau_{1}^\ell,\tau_{1}^\ell+\tau_{2}^\ell[$ for the first
      incoming road in the coordinates, from left to right,
      $(\rho,\eta)$ and $(x, t)$.}\label{fig:tau22_Lemma3}
  \end{figure}

  Similarly, in the time interval
  $[\tau_{1}^\ell + \tau_{2}^\ell, 2\tau_{1}^\ell + \tau_{2}^\ell[$,
  the traffic light is green for road $I_1$ and red for $I_2$;
  so we need to consider a Riemann
  problem between
  $(R,R \bar w_1)$ and $\left(\rho_2^\flat, \eta_2^\flat\right)$,
  see Figure~\ref{fig:tau3_Lemma3}. 
  The solution consists in a rarefaction curve of the first family
  between $(R,R \bar w_1)$ and $\left(\rho_1^\flat, \eta_1^\flat\right)$
  followed by a linear wave between
  $\left(\rho_1^\flat, \eta_1^\flat\right)$ and $\left(\rho_2^\flat, \eta_2^\flat\right)$.
  The situation of $I_2$ is analogous to that represented
  in Figure~\ref{fig:tau11_Lemma3}. More precisely at the point
  $\left(\tau_{1}^\ell + \tau_{2}^\ell, 0\right)$ a shock wave with negative
  speed is generated and it connects $\left(\rho_2^\flat, \eta_2^\flat\right)$
  with $\left(R,R \bar w_2\right)$.
  
  \begin{figure}[t!]
    \centering \input{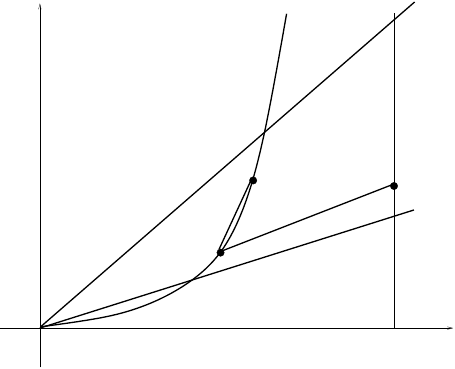tex_t} \hfil
    \input{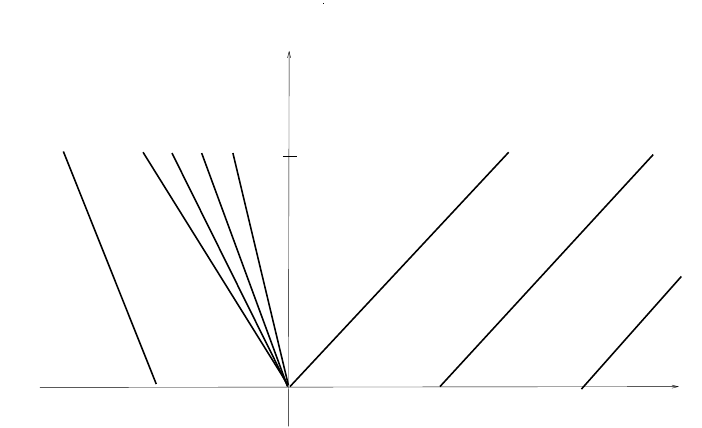tex_t}
    \caption{The situation of Lemma~\ref{Lemma3} in the time interval $[\tau_{1}^\ell + \tau_{2}^\ell, 2\tau_{1}^\ell + \tau_{2}^\ell[$ for the
      first incoming road and the outgoing road in the coordinates,
      from left to right, $(\rho,\eta)$ and $(x, t)$.}\label{fig:tau3_Lemma3}
  \end{figure}
  \begin{figure}[t!]
    \centering \input{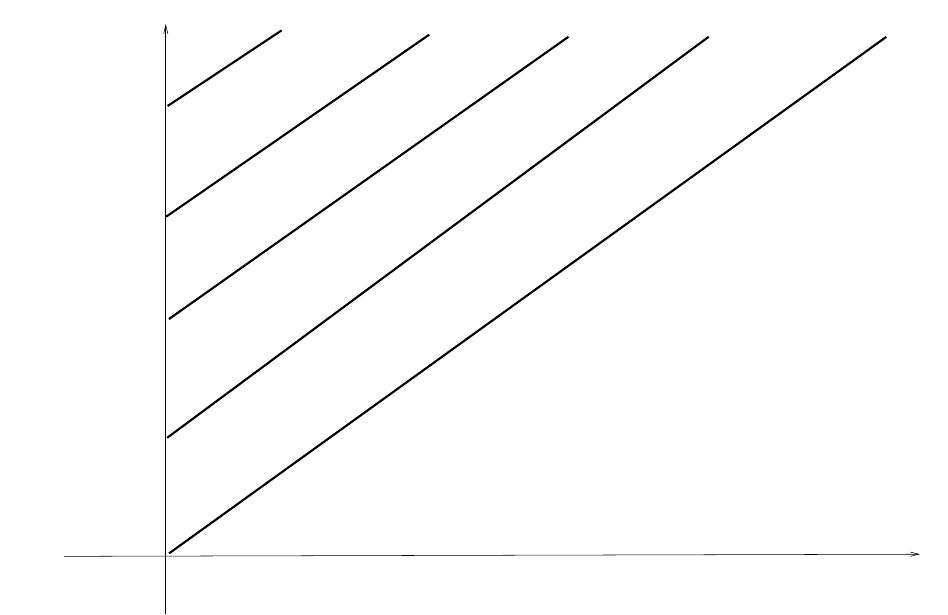tex_t}
    \caption{The situation of Lemma~\ref{Lemma3} in the time interval $[0,2\tau_{1}^\ell+2\tau_{2}^\ell[$ for the outgoing road
      in the coordinates $(x, t)$. The two states $\left(\rho_1^\flat, \eta_1^\flat\right)$ and $\left(\rho_2^\flat, \eta_2^\flat\right)$, separated by linear waves, alternate periodically.}\label{fig:tau*_Lemma3}
  \end{figure}

 We proceed in the same way until we arrive at time $t = T$.
  The solution in $I_3$ is given by~(\ref{eq:I_3_Lemma3});
  see Figure~\ref{fig:tau*_Lemma3}. 
\end{proof}

\begin{lemma}
\label{Lemma4}
  Assume that the initial conditions satisfy
  \begin{equation*}
    \left(\bar \rho_{1}, \bar \eta_1\right) \in C, \quad
    \left(\bar \rho_{2}, \bar \eta_2\right) \in C, \quad
    \left(\bar \rho_{3}, \bar \eta_3\right) \in F.
  \end{equation*}
  Then the solution in the outgoing road $I_3$ is
  \begin{equation}
    \label{eq:I_3_Lemma4}
    \left(\rho_{\ell, 3}, \eta_{\ell, 3}\right) (t,x) =
    \left\{
      \begin{array}{l@{\quad}l}
      \left(\bar \rho_3, \bar \eta_3\right),
        &
          \textrm{ if }\, 0 < t < T, \quad x > V_{\max} t,
          \\
        \left(\rho_{1}^\flat, \eta_1^\flat\right),
        &
          \textrm{ if }\, (t, x) \in A_1^\ell,
        \\
        \left(\rho_{2}^\flat, \eta_2^\flat\right),
        &
          \textrm{ if }\, (t, x) \in A_2^\ell,
      \end{array}
    \right.
  \end{equation}
  where
  \begin{equation}
    \label{eq:A_1}
    A_1^\ell = \bigcup_{i=0}^{\ell - 1} \left\{(t, x):\,
      \begin{array}{c}
        0 < t < T, \quad x > 0 
        \\
        t - \tau_1^\ell - i \frac{T}{\ell} 
        < \frac{x}{V_{max}}
        < t - i \frac{T}{\ell}
      \end{array}
    \right\}
  \end{equation}
  and
  \begin{equation}
    \label{eq:A_2}
    A_2^\ell = \bigcup_{i=1}^{\ell} \left\{(t, x):\,
      \begin{array}{c}
        0 < t < T, \quad x > 0 
        \\
        t - i \frac{T}{\ell} 
        < \frac{x}{V_{max}}
        < t - i \frac{T}{\ell} + \tau_1^\ell
      \end{array}
    \right\}.
  \end{equation}
\end{lemma}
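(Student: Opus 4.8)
The plan is to follow the same explicit, interval-by-interval construction used in the proofs of Lemmas~\ref{Lemma1} and~\ref{Lemma3}. First I would decompose $[0,T[$ into the successive green phases of the traffic lights and, on each phase, solve a classical Riemann problem on the single road obtained by merging the currently-green incoming road with $I_3$. On the first phase $[0,\tau_1^\ell[$ the light is green for $I_1$; since $(\bar\rho_1,\bar\eta_1)\in C$ and $(\bar\rho_3,\bar\eta_3)\in F$, the merged problem on $I_1\cup I_3$ is solved by case~(3) of the single-road solver, giving a first-family wave (which preserves $\eta/\rho$) from $(\bar\rho_1,\bar\eta_1)$ up to the point of $F\cap C$ on the line $\eta/\rho=\bar w_1$, namely $(\rho_1^\flat,\eta_1^\flat)$, followed by a linear wave from $(\rho_1^\flat,\eta_1^\flat)$ to $(\bar\rho_3,\bar\eta_3)$. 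Meanwhile the flux through $I_2$ vanishes and its trace at the junction is the stopped state $(R,R\bar w_2)$, exactly as in Lemma~\ref{Lemma1}.

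The decisive structural fact, which I would establish next, is that every first-family wave produced in this construction has \emph{negative} speed and therefore travels into the incoming road, never into $I_3$. This is immediate from hypothesis~\textbf{(H-3)}: each such wave connects a state of $C$ (either the datum $(\bar\rho_1,\bar\eta_1)$ or a stopped trace $(R,R\bar w_i)$) to a point of $F\cap C$, hence lies entirely in the congested phase. As a result, the trace on the $I_3$-side of the junction is always the relevant $\flat$-point, and the only wave emitted into $I_3$ is a linear wave of speed $V_{\max}$. I would then follow the traces through the phase switches: at $t=\tau_1^\ell$ the light turns green for $I_2$, with left trace the congested state $(R,R\bar w_2)$ and right trace $(\rho_1^\flat,\eta_1^\flat)\in F$; this is again a case~(3) configuration, releasing $(\rho_2^\flat,\eta_2^\flat)$ into $I_3$ behind a linear wave at speed $V_{\max}$ that separates it from the band of $(\rho_1^\flat,\eta_1^\flat)$ emitted during the previous phase. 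Iterating over the $\ell$ cycles shows that $(\rho_1^\flat,\eta_1^\flat)$ and $(\rho_2^\flat,\eta_2^\flat)$ alternate periodically in $I_3$, separated by linear waves of speed $V_{\max}$, which is exactly the description~(\ref{eq:I_3_Lemma4}) with the bands $A_1^\ell$ and $A_2^\ell$.

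The point that deserves the most care is the one that sharply separates this lemma from Lemma~\ref{Lemma2}: because both incoming data lie in $C$, no first-family wave of positive speed is ever emitted into $I_3$, so the only waves present there are the linear waves, all of the same speed $V_{\max}$ and hence mutually parallel. They never interact, so---unlike in Lemma~\ref{Lemma2}---there is no exceptional set of measure $O(1/\ell^2)$ and the formula~(\ref{eq:I_3_Lemma4}) holds exactly. The remaining work is then purely the bookkeeping already exemplified in Lemmas~\ref{Lemma1} and~\ref{Lemma3}: confirming phase by phase that the left trace remains in $C$ while the right trace remains on $F\cap C$, so that case~(3) applies uniformly throughout $[0,T[$, and that the emitted linear waves assemble into the alternating band structure $A_1^\ell\cup A_2^\ell$ claimed in~(\ref{eq:I_3_Lemma4}).
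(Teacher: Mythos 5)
Your proposal is correct and follows essentially the same route as the paper: a phase-by-phase solution of classical Riemann problems on the merged road, with every first-family wave confined to the incoming roads (having negative speed since both its endpoints lie in $C$, by \textbf{(H-3)}), so that only parallel linear waves of speed $V_{\max}$ enter $I_3$ and assemble into the alternating bands $A_1^\ell$, $A_2^\ell$. The only cosmetic difference is that after the first cycle the paper simply observes that the configuration coincides with that of Lemma~\ref{Lemma3}, whereas you iterate the construction directly and make explicit the negative-speed observation that the paper leaves implicit.
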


\begin{proof}
  We proceed in the same way of the previous lemmas. In the time interval
  $[0,\tau_{1}^\ell[$ the traffic lights is green 
  in the first incoming road and we study the Riemann problem
  as a classical one considering a unique road given
  by the union of the $I_1$ and $I_3$:
  the Riemann problem between $(\bar \rho_{1}, \bar \eta_{1})$
  and $\left(\bar \rho_3, \bar \eta_3\right)$
  produces a rarefaction curve of the first family
  between $(\bar \rho_{1}, \bar \eta_{1})$ and $\left(\rho_1^\flat, \eta_1^\flat\right)$
  followed by a linear wave between
  $\left(\rho_1^\flat, \eta_1^\flat\right)$ and $\left(\bar \rho_3, \bar \eta_3\right)$, see Figure~\ref{fig:tau1_Lemma4}. 
  In $I_2$, the flow at the junction is equal to zero and the solution in the road $I_2$ is given by a shock wave of the first family connecting
  $\left(\bar \rho_2, \bar \eta_2\right)$ to $\left(R,R \bar w_2\right)$, see Figure~\ref{fig:tau11_Lemma4}.
  \begin{figure}[t!]
    \centering
    \input{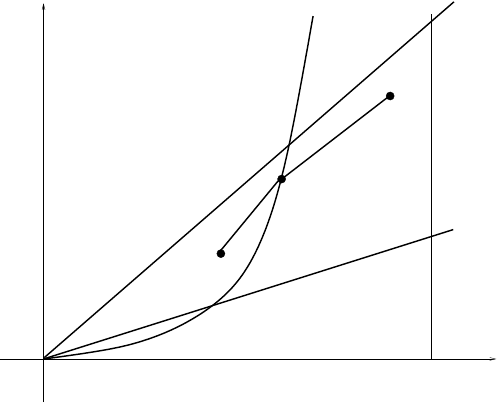tex_t}
    \hfil
    \input{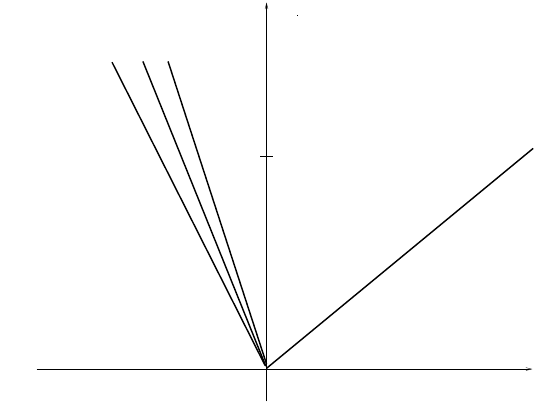tex_t}
    \caption{The situation of Lemma~\ref{Lemma4} in the time interval $[0,\tau_{1}^\ell[$ for the first incoming road and the
      outgoing road in the coordinates, from left to right, $(\rho,\eta)$ and
      $(x, t)$.}\label{fig:tau1_Lemma4}
  \end{figure}  
  \begin{figure}[t!]
    \centering \input{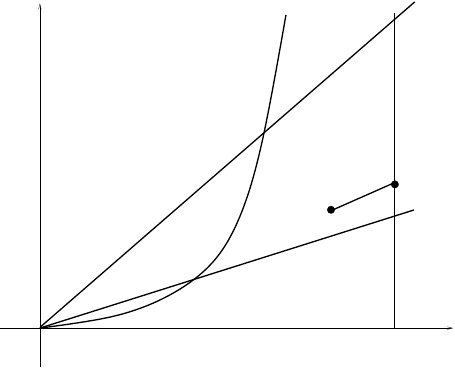tex_t} \hfil
    \input{rhoeta_tau1111.pdftex_t}
    \caption{The situation of Lemma~\ref{Lemma4} in the time interval $[0,\tau_{1}^\ell[$ for the second incoming road in
      the coordinates, from left to right, $(\rho,\eta)$ and
      $(x, t)$.}\label{fig:tau11_Lemma4}
  \end{figure}
  In the time interval $[\tau_{1}^\ell,\tau_{1}^\ell+\tau_{2}^\ell[$, when the traffic lights becomes red for the road $I_1$
  and green for $I_2$, the solution of the Riemann problem in the unique road given
  by the union of the $I_2$ and $I_3$ between
  $\left(R,R \bar w_2\right)$ and $\left(\rho_1^\flat, \eta_1^\flat\right)$ is given by a rarefaction curve of the first family
  between $(R, R \bar w_2)$ and $\left(\rho_2^\flat, \eta_2^\flat\right)$
  followed by a linear wave between $\left(\rho_2^\flat, \eta_2^\flat\right)$ and $\left(\rho_1^\flat, \eta_1^\flat\right)$, see Figure~\ref{fig:tau2_Lemma4}. 
  In $I_1$, instead, a shock wave with negative speed
  starts from the point $\left(\tau_1^\ell, 0\right)$ connecting the states
  $\left(\rho_1^\flat, \eta_1^\flat\right)$ and $\left(R,R \bar w_1\right)$, see Figure~\ref{fig:tau22_Lemma4}.
  \begin{figure}[t!]
    \centering \input{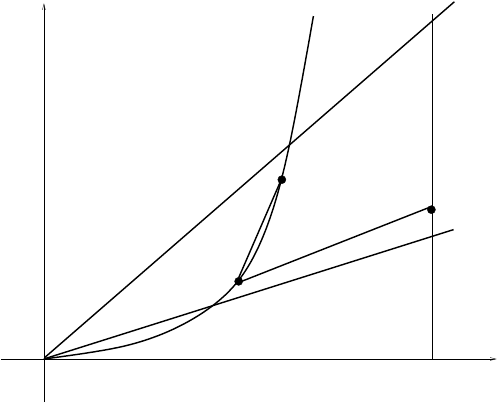tex_t} \hfil
    \input{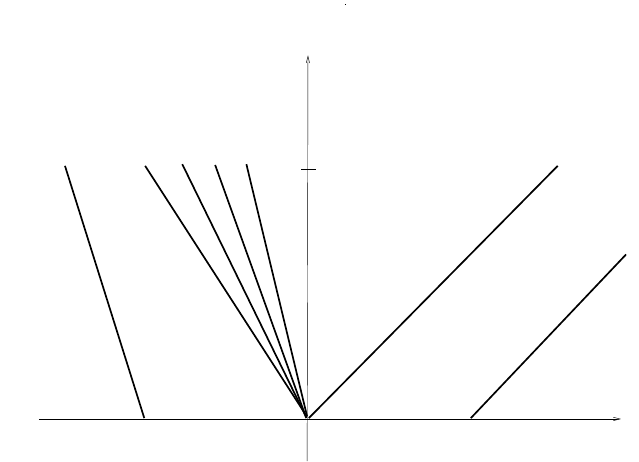tex_t}
    \caption{The situation of Lemma~\ref{Lemma4} in the time interval $[\tau_{1}^\ell,\tau_{1}^\ell+\tau_{2}^\ell[$ for the second
      incoming road and the outgoing road in the coordinates, from
      left to right, $(\rho,\eta)$ and $(x, t)$.}\label{fig:tau2_Lemma4}
  \end{figure}
  \begin{figure}[t!]
    \centering \input{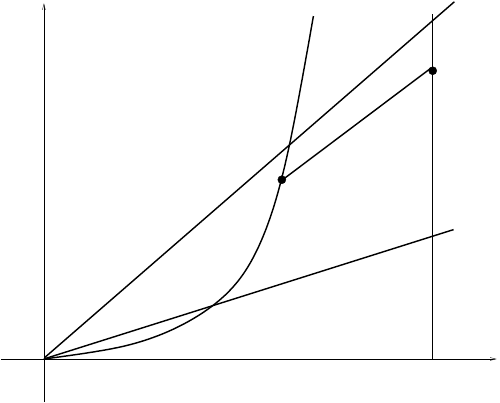tex_t} \hfil
    \input{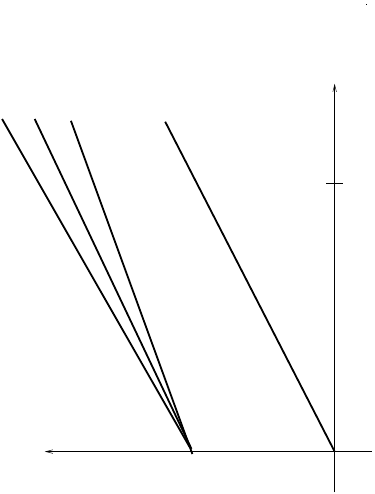tex_t}
    \caption{The situation of Lemma~\ref{Lemma4} in the time interval $[\tau_{1}^\ell,\tau_{1}^\ell+\tau_{2}^\ell[$ for the first
      incoming road in the coordinates, from left to right,
      $(\rho,\eta)$ and $(x, t)$.}\label{fig:tau22_Lemma4}
  \end{figure}
  
From time $\tau_{1}^\ell+\tau_{2}^\ell$ to time $t = T$, the solution becomes as that described in the previous Lemma~\ref{Lemma3} and it is given by~(\ref{eq:I_3_Lemma4}) with a structure similar to Figure~\ref{fig:tau*_Lemma3}.  
 
 \end{proof} 

\subsection{The General Case}
\label{subsec:II}

Assume that the values $\sigma_1 > 0, \cdots, \sigma_{n-1} > 0, \sigma_n = 1$,
defined in~(\ref{eq:sigma}), are all positive constant, not depending
on the number of cycles $\ell$.

We can now state and prove the main result of the paper.
\begin{theorem}
  Assume~\textbf{\textup{(H-1)}}, \textbf{\textup{(H-2)}},
  and~\textbf{\textup{(H-3)}} hold.
  Fix, for every $i \in \left\{1, \ldots, n+1\right\}$, 
  $(\bar \rho_{i}, \bar \eta_{i}) \in F \cup C$.
  Consider the Riemann problem at the junction for the phase transition
  model~(\ref{eq:Modeleta}) where the initial conditions are given by
  $(\bar \rho_{1}, \bar \eta_{1}), \cdots, (\bar \rho_{n+1}, \bar \eta_{n+1})$. 

  Denote with $\left(\rho_{\ell,i} (t,x), \eta_{\ell,i} (t,x)\right)$
  ($i \in \left\{1, \cdots, n+1\right\}$)
  the solution to the Riemann problem, where the junction is governed by
  the traffic lights with $\ell$ cycles.

  If $\ell \to +\infty$, then there exists a function
  $\left(\widetilde \rho_{n+1} (t,x), \widetilde\eta_{n+1} (t,x)\right)$,
  defined in the outgoing road $I_{n+1}$, such that
  \begin{equation}
    \label{eq:convergence}
    \left(\rho_{\ell,n+1}, \eta_{\ell,n+1}\right) 
    \rightharpoonup^\ast 
    \left(\widetilde \rho_{n+1}, \widetilde\eta_{n+1}\right)
  \end{equation}
  converges 
  in the weak$^\ast$ topology of $\L{\infty}\left([0, T] \times I_{n+1}\right)$
  and the limit function
  $\left(\widetilde \rho_{n+1} (t,x), \widetilde\eta_{n+1} (t,x)\right)$
  is a weak solution to~(\ref{eq:Modeleta}) on $I_{n+1}$.

  Moreover, if $\left(\bar \rho_{n+1}, \bar \eta_{n+1}\right) \in F$,
  then
  \begin{equation}
    \label{eq:solution_F}
    \begin{split}
      \widetilde \rho_{n+1} (t,x) = & \left\{
        \begin{array}{ll}
          \bar \rho_{n+1}, 
          & \textrm{ if } x > V_{\max} t,\, 0 < t < T,
          \\
          \frac{\displaystyle 1}{\displaystyle
          \left[\sum_{i = 1}^{n} \sigma_i\right]}
          \displaystyle\sum_{i = 1}^n \sigma_i \rho_i^\flat,
          & \textrm{ if } 0 < x < V_{\max} t,\, 0 < t < T,
        \end{array}
      \right.
      \\
      \widetilde \eta_{n+1} (t,x) = & \left\{
        \begin{array}{ll}
          \bar \eta_{n+1}, 
          & \textrm{ if } x > V_{\max} t,\, 0 < t < T,
          \\
          \frac{\displaystyle 1}{\displaystyle
          \left[\sum_{i = 1}^{n} \sigma_i\right]}
          \displaystyle\sum_{i = 1}^n \sigma_i \eta_i^\flat,
          & \textrm{ if } 0 < x < V_{\max} t,\, 0 < t < T.
        \end{array}
      \right.
    \end{split}
  \end{equation}

  Instead, if $\left(\bar \rho_{n+1}, \bar \eta_{n+1}\right) \in C$,
  then
  \begin{equation}
    \label{eq:solution_C}
    \begin{split}
      \widetilde \rho_{n+1} (t,x) = & \left\{
        \begin{array}{ll}
          \bar \rho_{n+1}, 
          & \textrm{ if } x > \lambda t,\, 0 < t < T,
          \\
          \frac{\displaystyle 1}{\displaystyle
          \left[\sum_{i = 1}^{n} \sigma_i\right]}
          \displaystyle\sum_{i = 1}^n \sigma_i \rho_i^\sharp,
          & \textrm{ if } 0 < x < \lambda t,\, 0 < t < T,
        \end{array}
      \right.
      \\
      \widetilde \eta_{n+1} (t,x) = & \left\{
        \begin{array}{ll}
          \bar \eta_{n+1}, 
          & \textrm{ if } x > \lambda t,\, 0 < t < T,
          \\
          \frac{\displaystyle 1}{\displaystyle
          \left[\sum_{i = 1}^{n} \sigma_i\right]}
          \displaystyle\sum_{i = 1}^n \sigma_i \eta_i^\sharp,
          & \textrm{ if } 0 < x < \lambda t,\, 0 < t < T,
        \end{array}
      \right.
    \end{split}
  \end{equation}
  where $\lambda = v\left(\bar \rho_{n+1}, \bar \eta_{n+1}\right)$.

 Finally,
  for a.e. $t \in [0, T]$, the trace at $x=0^+$ of the maximal speed
  $\widetilde w_{n+1} = \frac{\widetilde \eta_{n+1}}{\widetilde \rho_{n+1}}$
  satisfies
  \begin{equation}
    \label{eq:cond_w_outgoing}
    \widetilde w_{n+1}(t, 0^+) 
    =
    \frac{1}{\sigma_1 + \cdots + \sigma_{n}}
    \left[\sigma_1 \bar w_1 + \cdots + \sigma_{n} \bar w_{n}
    \right].
  \end{equation}
\end{theorem}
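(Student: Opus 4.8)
The plan is to reduce everything to the $n=2$ analysis already carried out in Lemmas~\ref{Lemma1}--\ref{Lemma4} and then to read the limit $\ell\to+\infty$ as the homogenization of a rapidly oscillating, piecewise constant profile. First I would establish, for each fixed $\ell$, the exact wave pattern in the outgoing road $I_{n+1}$, arguing cycle by cycle exactly as in the proofs of the four lemmas. During the green phase of road $I_i$ the junction behaves like the single road $I_i\cup I_{n+1}$; the incoming trace is the stopped state $(R,R\bar w_i)$ inherited from the previous red phase, and solving the classical Riemann problem produces a first family wave, which has negative speed by~\textbf{(H-3)} and is thus confined to $I_i$, followed by a wave that injects into $I_{n+1}$ the single state $(\rho_i^\sharp,\eta_i^\sharp)$ when $(\bar\rho_{n+1},\bar\eta_{n+1})\in C$, or $(\rho_i^\flat,\eta_i^\flat)$ when $(\bar\rho_{n+1},\bar\eta_{n+1})\in F$. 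The crucial structural fact is that all injected states share the common velocity $\lambda$, equal to $v(\bar\rho_{n+1},\bar\eta_{n+1})$ in the congested case and to $V_{\max}$ in the free case, so consecutive injected states are separated by second family (respectively linear) waves all travelling at speed $\lambda$. Consequently the solution equals $(\bar\rho_{n+1},\bar\eta_{n+1})$ for $x>\lambda t$ and, inside the cone $0<x<\lambda t$, is a function of the single variable $s:=t-x/\lambda$ that is $\tfrac{T}{\ell}$-periodic and takes the $i$-th injected value on a subinterval of length $\tau_i^\ell$.

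The second step is to discard the transient set. As in Lemma~\ref{Lemma2}, whenever an incoming road lies in $F$ while the current junction trace lies in $C$ (or in the analogous mismatched configurations, which can only occur during the first cycle, since afterwards every road is released from $(R,R\bar w_i)$) the first interactions may emit spurious first family waves of positive or negative speed; these are reabsorbed after finitely many interactions, and the area estimate of the type~\eqref{eq:AT} shows that the exceptional region, where the solution departs from the clean periodic profile, has Lebesgue measure of order $1/\ell$ at most, obtained by summing the $O(1/\ell^2)$ per-interaction bounds over the cycles. Since this bound tends to $0$, the exceptional set is negligible for the weak$^\ast$ limit.

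Next I would pass to the limit. Writing the outgoing profile inside the cone as $g_\ell(s)$, the family $g_\ell$ is uniformly bounded in $\L{\infty}$ and is a rescaling of a fixed periodic profile with period $\tfrac{T}{\ell}\to 0$; the elementary weak$^\ast$ convergence of rapidly oscillating periodic functions then gives $g_\ell\rightharpoonup^\ast\bar g$, where $\bar g$ is the mean over one period. Because the $i$-th state occupies the fraction $\tau_i^\ell/(T/\ell)=\sigma_i/(\sigma_1+\cdots+\sigma_n)$ of the period, this mean is exactly $\frac{1}{\sum_{i=1}^n\sigma_i}\sum_{i=1}^n\sigma_i(\rho_i^\sharp,\eta_i^\sharp)$ in the congested case and $\frac{1}{\sum_{i=1}^n\sigma_i}\sum_{i=1}^n\sigma_i(\rho_i^\flat,\eta_i^\flat)$ in the free case; undoing the change of variables $s=t-x/\lambda$ yields~\eqref{eq:solution_C} and~\eqref{eq:solution_F}, and uniqueness of the weak$^\ast$ limit promotes subsequential convergence to~\eqref{eq:convergence} for the whole family. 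To see that the limit is a weak solution, I would exploit that on every strip the velocity is the single constant $\lambda$, so the nonlinear fluxes $\rho v$ and $\eta v$ reduce to the \emph{linear} expressions $\lambda\rho$ and $\lambda\eta$; hence $f(\rho_\ell,\eta_\ell)=\lambda(\rho_\ell,\eta_\ell)$ off the negligible transient set, the weak formulation of~\eqref{eq:Modeleta} passes to the weak$^\ast$ limit, and the limit solves $\partial_t(\widetilde\rho_{n+1},\widetilde\eta_{n+1})+\partial_x\big(\lambda(\widetilde\rho_{n+1},\widetilde\eta_{n+1})\big)=0$, so the single front $x=\lambda t$ satisfies the Rankine--Hugoniot condition at speed $\lambda$.

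Finally, for~\eqref{eq:cond_w_outgoing} I would note that the maximal speed field $w_\ell=\eta_\ell/\rho_\ell$ is itself piecewise constant and equals $\bar w_i$ on the $i$-th strip, since $\eta_i^\sharp/\rho_i^\sharp=\bar w_i$ and $\eta_i^\flat/\rho_i^\flat=\bar w_i$. Averaging this $\tfrac{T}{\ell}$-periodic field over one period, the same homogenization argument shows that its trace at $x=0^+$ converges, for a.e. $t\in[0,T]$, to $\frac{1}{\sigma_1+\cdots+\sigma_n}\left(\sigma_1\bar w_1+\cdots+\sigma_n\bar w_n\right)$, which is~\eqref{eq:cond_w_outgoing}. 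I expect the main obstacle to be the rigorous control of the transient set: one must verify that across all incoming phase configurations the spurious first family waves are genuinely reabsorbed and contribute only an $O(1/\ell)$ area, so that the only surviving contribution to the weak$^\ast$ limit is the clean periodic profile. The passage to the limit is then essentially routine precisely because the nonlinearity disappears on the support of the oscillation, where the flux is linear in $(\rho,\eta)$ with the single speed $\lambda$.
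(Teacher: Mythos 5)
Your construction of the oscillating profile, the treatment of the transients, and the passage to the weak$^\ast$ limit follow the same route as the paper: reduction to the $n=2$ case of Lemmas~\ref{Lemma1}--\ref{Lemma4} (the paper also normalizes $(\bar\rho_2,\bar\eta_2)=(R,\bar\eta_2)\in C$ using the first red phase, which is your ``released from $(R,R\bar w_i)$'' remark), the observation that inside the cone the solution is $\frac{T}{\ell}$-periodic along lines of speed $\lambda$ and takes the value $(\rho_i^\sharp,\eta_i^\sharp)$ (resp.\ $(\rho_i^\flat,\eta_i^\flat)$) on the fraction $\sigma_i/\sum_j \sigma_j$ of each period, the area estimate of type~(\ref{eq:AT}) for the transients, and the identification of the limit with the period average, giving~(\ref{eq:solution_F}) and~(\ref{eq:solution_C}). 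Your weak-solution argument (linearity of the flux on the support of the oscillation) differs only cosmetically from the paper's direct Rankine--Hugoniot check, and both rest on the same implicit fact that the averaged state still travels with speed $\lambda$ (a point worth making explicit, since the iso-velocity curve $\eta=\lambda\rho/\psi(\rho)$ is not a straight line in the $(\rho,\eta)$ plane).

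The genuine gap is in your last step. The theorem defines $\widetilde w_{n+1}=\widetilde\eta_{n+1}/\widetilde\rho_{n+1}$, the quotient of the two weak$^\ast$ limits; its trace at $x=0^+$ is therefore $\bigl(\sum_i\sigma_i\eta_i^\sharp\bigr)/\bigl(\sum_i\sigma_i\rho_i^\sharp\bigr)=\bigl(\sum_i\sigma_i\rho_i^\sharp\bar w_i\bigr)/\bigl(\sum_i\sigma_i\rho_i^\sharp\bigr)$ (and the same with $\flat$), which is exactly what the paper computes: a convex combination of the $\bar w_i$ with weights proportional to $\sigma_i\rho_i^\sharp$, equivalently to the fluxes $\sigma_i\rho_i^\sharp v(\rho_i^\sharp,\eta_i^\sharp)$ since all these states share the common speed --- i.e.\ the form~(\ref{eq:conservation2}) of the Riemann solver. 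You instead compute the weak$^\ast$ limit of the quotient $w_\ell=\eta_\ell/\rho_\ell$, which is the plain time average $\sum_i\sigma_i\bar w_i/\sum_i\sigma_i$. These two numbers differ whenever the $\bar w_i$ are not all equal (then the $\rho_i^\sharp=\psi^{-1}(\lambda/\bar w_i)$ are distinct, and the $\rho$-weighted mean is strictly larger than the unweighted one): weak$^\ast$ limits do not commute with the nonlinear map $(\rho,\eta)\mapsto\eta/\rho$, and the oscillation of $\rho_\ell$ is precisely what creates the discrepancy. So your final step establishes a formula for a different object than the one named in the statement, and the identification of the trace of $\lim(\eta_\ell/\rho_\ell)$ with the trace of $(\lim\eta_\ell)/(\lim\rho_\ell)$ is invalid here. (Incidentally, this exposes a tension inside the paper itself: the literal right-hand side of~(\ref{eq:cond_w_outgoing}) is your unweighted average, while the paper's own computation of $\widetilde\eta/\widetilde\rho$ produces the $\rho$-weighted one; but a proof of the statement as formulated, about $\widetilde\eta_{n+1}/\widetilde\rho_{n+1}$, must go through the quotient of the limits, as the paper does.)
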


\begin{proof}
  First consider the case $n=2$, i.e. the junction with
  $n = 2$ incoming roads and one outgoing road.
  In the time interval $[0, \tau_1^\ell[$ the traffic lights for the incoming
  road $I_2$ is red. Hence the trace of the solution in $I_2$ has maximal
  density $R$. This means that we may assume
  that
  \begin{equation}
    \label{eq:ic-I2}
    \left(\bar \rho_2, \bar \eta_2\right) = \left(R, \bar \eta_2\right)
    \in C.
  \end{equation}

  If the initial condition for the road $I_3$ belongs to the free phase $F$,
  then Lemma~\ref{Lemma3} and Lemma~\ref{Lemma4} imply that
  the sequence of solutions in $I_3$ is of Rademacker type,
  see for instance~\cite[Exercise~4.18]{MR2759829}.
  Hence we deduce that the limit
  of such sequence is given by~(\ref{eq:solution_F}).

  If the initial condition for the road $I_3$ belongs to the congested
  phase $C$,
  then Lemma~\ref{Lemma1} and Lemma~\ref{Lemma2} imply that
  the sequence of solutions in $I_3$ is again of Rademacker type,
  and so the limit
  of such sequence is given by~(\ref{eq:solution_C}).

  The functions~(\ref{eq:solution_F}) and~(\ref{eq:solution_C}) are piecewise
  constant and the discontinuity travels with speed satisfying
  the Rankine-Hugoniot condition. Hence they are weak solutions
  to~(\ref{eq:Modeleta}).

  Finally, consider the maximal speed
  \begin{equation*}
   \widetilde w_3(t, 0^+) = \frac{\widetilde \eta_3(t, 0^+)}
   {\widetilde \rho_3(t, 0^+)}
  \end{equation*}
  of the solution
  $\left(\widetilde \rho_3, \widetilde \eta_3\right)$ at the junction.
  If $\left(\bar \rho_3, \bar \eta_3\right) \in F$, then
  \begin{align*}
    \widetilde w_3(t, 0^+) 
    & = \frac{\sigma_1 \eta_1^\flat + \sigma_2 \eta_2^\flat}
      {\sigma_1 \rho_1^\flat + \sigma_2 \rho_2^\flat}
      = \frac{\sigma_1 \bar w_1\rho_1^\flat + \sigma_2 \bar w_2 \rho_2^\flat}
      {\sigma_1 \rho_1^\flat + \sigma_2 \rho_2^\flat}
\\
    & = 
      \frac{\sigma_1 \rho_1^\flat V_{\max}}{\sigma_1 \rho_1^\flat V_{\max}+\sigma_2 \rho_2^\flat V_{\max}} \bar w_{1} + \frac{\sigma_2\rho_2^\flat V_{\max}}{\sigma_1 \rho_1^\flat V_{\max}+\sigma_2 \rho_2^\flat V_{\max}} \bar w_{2}
  \\
    & = 
      \frac{\gamma_{1}}{\gamma_{1}+\gamma_{2}}\bar w_{1} + \frac{\gamma_{2}}{\gamma_{1}+\gamma_{2}} \bar w_{2}\,,
  \end{align*}
  with $\gamma_{1} = \sigma_1 \rho_1^\flat V_{\max}$ and
  $\gamma_{2} = \sigma_2 \rho_2^\flat V_{\max}$.
Therefore the maximal speed $\widetilde w(t,0^+)$ in the outgoing road is a convex combination of the maximal speeds in the two incoming roads $\bar w_{1}$ and $\bar w_{2}$ and it coincides with the condition on the maximal speed proposed in~\cite{GaravelloMarcellini}.

  If $\left(\bar \rho_3, \bar \eta_3\right) \in C$, then
  \begin{align*}
    \widetilde w_3(t, 0^+) 
    & = \frac{\sigma_1 \eta_1^\sharp + \sigma_2 \eta_2^\sharp}
      {\sigma_1 \rho_1^\sharp + \sigma_2 \rho_2^\sharp}
      = \frac{\sigma_1 \bar w_1\rho_1^\sharp + \sigma_2 \bar w_2 \rho_2^\sharp}
      {\sigma_1 \rho_1^\sharp + \sigma_2 \rho_2^\sharp}
\\
    & = \!
      \frac{\sigma_1 \rho_1^\sharp v\left(\bar \rho_3, \bar \eta_3\right)}
      {\sigma_1 \rho_1^\sharp v\left(\bar \rho_3, \bar \eta_3\right) +
      \sigma_2 \rho_2^\sharp v\left(\bar \rho_3, \bar \eta_3\right)} \bar w_{1}
      \!+ \!\frac{\sigma_2\rho_2^\sharp v\left(\bar \rho_3, \bar \eta_3\right)}
      {\sigma_1 \rho_1^\sharp v\left(\bar \rho_3, \bar \eta_3\right) 
      + \sigma_2 \rho_2^\sharp v\left(\bar \rho_3, \bar \eta_3\right)} \bar w_{2}
  \\
    & = 
      \frac{\gamma_{1}}{\gamma_{1}+\gamma_{2}}\bar w_{1} + \frac{\gamma_{2}}{\gamma_{1}+\gamma_{2}} \bar w_{2}\,,
  \end{align*}
  with $\gamma_{1} = \sigma_1 \rho_1^\sharp
  v\left(\rho_1^\sharp, \eta_1^\sharp\right)$ and
  $\gamma_{2} = \sigma_2 \rho_2^\sharp v\left(\rho_2^\sharp, \bar \eta_2^\sharp
  \right)$, since
  \begin{equation*}
    v\left(\rho_1^\sharp, \eta_1^\sharp\right) = 
    v\left(\rho_2^\sharp, \eta_2^\sharp\right) =
    v\left(\bar \rho_3, \bar \eta_3\right).
  \end{equation*}
  We conclude as in the previous case.

  The proof for the general case $n \ge 2$ can be obtained in a similar way.
  Indeed in the incoming roads where the traffic lights is red, the trace for
  the density is $R$. Instead, in the incoming road where the traffic lights
  is green, then the initial condition for such road is propagated in $I_{n+1}$
  with speed $v\left(\bar \rho_{n+1}, \bar \eta_{n+1}\right)$.
  Hence the solution
  $\left(\tilde \rho_{\ell, n+1}, \tilde \eta_{\ell, n+1}\right)$ is similar
  to that of Lemmas~\ref{Lemma1}-\ref{Lemma4} in the sense that
  there exist subsets $A_1^\ell, \cdots A_n^\ell$ of $[0,T] \times I_{n+1}$
  with a ``periodic'' structure in which 
  $\left(\tilde \rho_{\ell, n+1}, \tilde \eta_{\ell, n+1}\right)$
  is given respectively by
  $\left(\bar \rho_1, \bar \eta_1\right), \cdots,
  \left(\bar \rho_n, \bar \eta_n\right)$. This permits to conclude.
\end{proof}

\section*{Acknowledgments}
The authors were partially supported by the INdAM-GNAMPA 2017 project ``Conservation Laws: from Theory to Technology''.

{\small{

    \bibliographystyle{abbrv}

    \bibliography{model} }}
\end{document}